\newtheorem{thm}{Theorem}[subsection]
\newtheorem{lem}[thm]{Lemma}
\newtheorem{prop}[thm]{Proposition}
\newtheorem{cor}[thm]{Corollary}
\newtheorem*{prob}{\bf Problem}
\theoremstyle{definition}\newtheorem{df}[thm]{Definition}
\theoremstyle{definition}\newtheorem{rem}[thm]{Remark}
\newtheorem{exm}[thm]{\it Example}
\theoremstyle{definition}
\renewcommand{\phi}{\varphi}
\newcommand{\N}{\mathbb{N}}
\newcommand{\Homeo}{\mathcal{H}}
\newcommand{\A}{\mathcal{A}}
\newcommand{\Z}{\mathbb{Z}}
\newcommand{\J}{\mathcal{J}}
\newcommand{\tildeX}{\widetilde{X}}
\newcommand{\X}{\underline{X}}
\newcommand{\KX}{{}_{k_2}X_{l_2}}
\newcommand{\kX}{{}_{k_1}X_{l_1}}
\newcommand{\Kx}{{}_{k_2}[x]_{l_2}}
\newcommand{\kx}{{}_{k_1}[x]_{l_1}}
\newcommand{\jf}{\mathfrak{j}}
\newcommand{\I}{\mathcal{I}}
\newcommand{\D}{\mathcal{D}}
\newcommand{\Sph}{\mathbb{S}}
\newcommand{\La}{\mathcal{L}}
\newcommand{\Raf}{\mathcal{R}_0(\tilde{\alpha})}
\newcommand{\Rbt}{\mathcal{R}_0(\tilde{\beta})}
\newcommand{\C}{\mathbb{C}}
\newcommand{\T}{\mathbb{T}}
\newcommand{\taf}{\tilde{\alpha}}
\newcommand{\tbt}{\tilde{\beta}}
\newcommand{\Aff}{\operatorname{Aff}}
\newcommand{\Pj}{\mathcal{P}}
\newcommand{\id}{\operatorname{id}}
\newcommand{\ep}{\varepsilon}
\newcommand{\F}{{\mathcal F}}
\newcommand{\Lip}{\mathcal{L}}
\newcommand{\G}{\mathcal{G}}
\newcommand{\af}{{\alpha}}
\newcommand{\bt}{{\beta}}
\newcommand{\beq}{\begin{eqnarray}}
\newcommand{\eneq}{\end{eqnarray}}
\begin{document}
\title[The nuc. dim. of Cuntz-Pimsner $C^*$-algebras ass. with minimal shift spaces]{A note on the nuclear dimension of Cuntz-Pimsner $C^*$-algebras associated with minimal shift spaces}

\author{Zhuofeng He, Sihan Wei}

\maketitle
\begin{center}
Research Center for Operator Algebras, 

School of Mathematics and Science, 

East China Normal University, 

Shanghai, China
\end{center}

\begin{abstract} 
For every minimal one-sided shift space $X$ over a finite alphabet, left special elements are those points in $X$ having at least two preimages under the shift operation. In this paper, we show that the Cuntz-Pimsner $C^*$-algebra $\mathcal{O}_X$ has nuclear dimension $1$ when $X$ is minimal and the number of left special elements in $X$ is finite. This is done by describing concretely the cover of $X$ which also recovers an exact sequence, discovered before by T. Carlsen and S. Eilers.
\quad\par
\quad\par
\noindent{\bf Keywords}. Cuntz-Pimsner algebras $\cdot$ Nuclear dimension $\cdot$ Minimal shift spaces
\quad\par
\quad\par
\noindent{\bf Mathematics Subject Classification (2010)} Primary 46L05, 37A55 
\end{abstract}

\section{Introduction}\label{sec:1}
The Cuntz-Pimsner $C^*$-algebra $\mathcal{O}_X$ is an invariant of conjugacy associated to any shift space $X$. This interplay between shift spaces and $C^*$-algebras starts from the study of the $C^*$-algebra $\mathcal{O}_A$ of a two-sided shift of finite type represented by a $\{0,1\}$-matrix $A$ in a canonical way, see \cite{CK}, in which the associated $C^*$-algebra is originally called a {\it Cuntz-Krieger algebra}. In the next thirty years, the $C^*$-algebra $\mathcal{O}_X$, to every shift space $X$, is constructed and studied in \cite{BCE}, \cite{C}, \cite{C2}, \cite{CE}, \cite{CM}, \cite{KMW}, \cite{KM}, \cite{M}, \cite{M2}, \cite{M3} by several authors (for example, K. Matsumoto, S. Eilers,  T. Carlsen, K. Brix and their collaborators, to name a few), but in different manners for their own uses. We additionally remark that the associated $C^*$-algebra considered in the paper is first defined by T. Carlsen in \cite{C2} using a Cuntz-Pimsner construction, which is why we call it a {\it Cuntz-Pimsner $C^*$-algebra}, as is also pointed out in \cite{BC}.

Among these approaches, the cover $(\tilde{X},\sigma_{\tildeX})$, of a one-sided shift space $X$, is a dynamical system constructed by T. Carlsen in \cite{BC}, and used to define the $\mathcal{O}_X$ as the full groupoid $C^*$-algebra of $\mathcal{G}_{\tilde{X}}$. In particular, the reason why Carlsen considers the groupoid $C^*$-algebra of the cover but not the shift space $X$ itself is that every such cover defines a dynamical system whose underlying map is a local homeomorphism, while this is not always the case for a one-sided shift. Actually, a one-sided shift on an infinite space is a local homeomorphism if and only if it is of finite type, as in \cite{P}.

In \cite{C3}, it is shown that for every shift space $X$ with the property (*), there is a surjective homomorphism $
\rho: \mathcal{O}_X\to C(\underline{X})\rtimes_{\sigma}\Z$,
which sends the diagonal subalgebra $\mathcal{D}_X$ onto the canonical commutative $C^*$-subalgebra $C(\underline{X})$, with $\underline{X}$  the corresponding two-sided shift space of $X$ and $\sigma$ the natural two-sided shift operation. Besides, if $X$ has the property (**), then 
\[{\rm ker}\rho\cong\mathbb{K}^{{\bf n}_X},\]
where ${\bf n}_X$ is a positive integer related to the structure of the left special elements in $X$, namely, the number of right shift tail equivalence classes of $X$ containing a left special element. Consequently, for every minimal shift space $X$, if it has the property (**), which is equivalent to $X$ having finitely many left special elements, then its Cuntz-Pimsner $C^*$-algebra $\mathcal{O}_X$ is an extension of a unital simple A$\T$-algebra by a finite direct sum of the compact operators. Also note that this extension makes $\mathcal{O}_X$ falls into a class of $C^*$-algebras considered by H. Lin and H. Su in \cite{LS}, called the direct limits of generalized Toeplitz algebras.

In \cite{B}, K. Brix considers the $C^*$-algebra $\mathcal{O}_\af$ of a one-sided Sturmian shift $X_\alpha$ for $\af$ an irrational number, by describing the cover of $X_\af$. In particular, he proves that the cover $\tildeX_\af$ of $X_\af$ is a union of the two-sided Sturmian shift $\underline{X_\af}$ and a dense orbit consisting of isolated points. The unique dense orbit corresponds to the unique point $\omega_\af$ in $X_\af$ which has two preimages under the shift operation. This is the first concrete description of covers of non-sofic systems, whereas the cover of a sofic system is a specific class of shifts of finite type. We remark here that the uniqueness of $\omega_\af$ benefits from the well-known fact that $X_\af$ has the smallest complexity growth for shift spaces with no ultimately periodic points: $p_{X}(n)=n+1$ for all $n\ge1$. 

There are two corollaries from the concrete description of the cover of a Sturmian system in \cite{B}: one for a reducing of the exact sequence in \cite{C3}  to its simplest form, that is, $\mathcal{O}_\af$ is an extension of $C(\underline{X_\af})\rtimes_\sigma\Z$ by $\mathbb{K}$; one for the precise value of dynamic asymptotic dimension of the associated groupoid. The latter together with the exact sequence make the $\mathcal{O}_\af$ be of nuclear dimension $1$, where the nuclear dimension is a concept that plays a key role in the classification programs for $C^*$-algebras.

In this note, we generalize this interesting approach and show that for every minimal one-sided shift $X$ with finitely many left special elements, the Cuntz-Pimsner algebra $\mathcal{O}_X$ has nuclear dimension $1$. More specifically, with our concrete description, the cover of each such space will be a finite disjoint union: a copy of the corresponding minimal two-sided shift space $\underline{X}$ (induced from the projection limit of the original one-sided shift), and ${\bf n}_X$ dense orbits, each consisting of isolated points.  This also recovers the whole situation of the exact sequence in \cite{C3}. We also hope that with this description, more $K$-information can be read out from the groupoid for many other minimal shifts, such as non-periodic Toeplitz shifts $X$ with lower complexity growth (which is to sufficiently make $X$ have finitely many left special elements, or equivalently, have the property (**)).

Finally, we also want to emphatically point out that there is a large class of minimal shifts for which our results apply, such as those with bounded complexity growth (see Example \ref{3.3.7} for the definition and Proposition \ref{3.3.9} for details). This class of minimal shifts includes minimal Sturmian shifts considered by K. Brix, which are defined to be the minimal shifts associated with irrational rotations; minimal shifts associated with interval exchange transformations, whose complexity functions are known to satisfy $p_X(n+1)-p_X(n)\leq d$ where $d$ is the number of subintervals; minimal shifts constructed from $(p,q)$-Toeplitz words in \cite{CK2}, where $p,q$ are natural numbers and $p|q$, whose complexity functions are shown to be linear; or also minimal shifts associated with a class of translations on $2$-torus in \cite{BST}, whose complexity functions satisfy $p_X(n)=2n+1$, to name a few.
\subsection{Outline of the paper}
The paper is organized as follows. Section \ref{sec:2} will provide definitions, including basic notions of one-sided shift spaces, the corresponding two-sided shift spaces and $C^*$-algebras. In Section \ref{sec:3}, we recall definitions of past equivalence, right tail equivalence, covers and their properties. A couple of technical preparations will also be presented for the later use. Section \ref{sec:4} is devoted to the main body of the paper, in which we give a concrete description to the cover of a minimal shift with finitely many left special elements. We divide the description into three parts: (i) for isolated points in the cover, see Theorem \ref{4.1.1}; (ii) for the surjective factor $\pi_X$, see Theorem \ref{4.2.4} and Theorem \ref{4.2.5}; (iii) for non-isolated points in the cover, see Theorem \ref{4.3.1}. Finally, we conclude our main result for the nuclear dimension of $\mathcal{O}_X$ in Section \ref{sec:5}.
\subsection{Acknowledgements} The authors were partially supported by Shanghai Key Laboratory of PMMP, Science and Technology Commission of Shanghai Municipality (STCSM), grant $\#$13dz2260400 and a NNSF grant (11531003). The first named author was also supported by Project funded by China Postdoctoral Science Foundation under Grant 2020M681221. We would like to thank members of Research Center for Operator Algebras, for providing online discussions weekly during the difficult period of COVID-19. The second named author would also like to thank his advisor Huaxin Lin from whom he is learning a lot as a doctoral student. The authors thank the anonymous referee for many helpful comments and suggestions as well.

\section{Preliminaries}\label{sec:2}
Throughout the paper, we denote by $\N$ the set of nonnegative integers. For a finite set $S$, we will always use $\#S$ to denote its cardinality.
\subsection{Shift spaces}
Let $\A=\{0,1\}$. Endowed with the product topology, the spaces $\A^\Z$ and $\A^\N$ are homeomorphic to the Cantor space, i.e., the totally disconnected compact metric space with no isolated point. Note that $\A^\Z$ and $\A^\N$ can be given the following metrics:
\begin{align*}
\underline{d}(\underline{x},\underline{y})&=\sup\{1/2^N: \underline{x}_k=\underline{y}_k\ {\rm for\ all}\ 0\leq |k|\leq N-1\},\\
d(x,y)&=\sup\{1/2^N: x_k=y_k \ {\rm for\ all}\ 0\leq k\leq N-1\}.
\end{align*}

We use $\A^*$ and $\A^\infty$ to denote the monoid of finite words and the set of infinite one-sided sequences with letters from $\A$, that is,
\[\A^*=\bigsqcup_{n\ge1}\A^n\cup\{\epsilon\},\ \ \A^\infty=\A^{\N},\]
where $\epsilon$ is the unique empty word in $\A^*$. For a word $\mu\in A^*$, we use $|\mu|$ to denote the length of $\mu$ and write $|\mu|=n$ if $\mu\in\A^n$. For the empty word, we usually define $|\epsilon|=0$. Besides, the length of any element $\mu$ in $\A^\infty$ is defined to be $\infty$. Let $\mu\in\A^*$ and $\nu\in\A^*\sqcup\A^\infty$, we say $\mu$ occurs in $\nu$, if there exists $a\in\A^*$ and $b\in\A^*\sqcup\A^\infty$ such that 
\[\nu=a\mu b.\]
If $\mu$ occurs in $\nu$, we also say $\mu$ is a factor of $\nu$.

A full-shift is a continuous map $\sigma: x\mapsto\sigma(x)$ from $\A^\N$ to $\A^\N$ (or $\A^\Z$ to $\A^\Z$) such that 
\[(\sigma(x))_n=x_{n+1}.\]
A one-sided (two-sided, respectively) shift space is a nonempty compact $\sigma$-invariant subspace $X$ of $\A^\N$ (or $\A^\Z$ respectively) together with the restriction $\sigma|_X$. Note that by $\sigma$-invariant, we mean $\sigma(X)\subset X$. Any two-sided shift is a homeomorphism, and any one-sided shift $\sigma: X\to X$ is injective if and only if $X$ is finite. Throughout the paper we will only consider one-sided shifts on infinite compact spaces. 

If $X$ is a shift space, $x\in X$ and $-\infty<n\leq m<\infty$, we define $x_{(n-1,m]}=x_{[n,m+1)}=x_{[n,m]}=x_nx_{n+1}\cdots x_{m}$. We also use $x_{(-\infty, m]}=x_{(-\infty,m+1)}$ or $x_{[n,\infty)}=x_{(n-1,\infty)}$ to denote the natural infinite positive and negative parts of $x$ respectively.

For any two-sided shift space $X$, we use $X_+$ to stand for the corresponding one-sided shift space, that is, $X_+=\{x_{[0,\infty)}: x\in X\}$. If $X$ is a one-sided shift space, then $\underline{X}$ is used in this paper, to denote the inverse limit of the projective system
\[X\stackrel{\sigma}{\leftarrow}X\stackrel{\sigma}{\leftarrow}\cdots\stackrel{\sigma}{\leftarrow}X\stackrel{\sigma}{\leftarrow}\cdots.\]
Note that $\underline{X}$ is a two-sided shift space under a canonical identification. 

For a shift space $X$, its language $\La(X)$ will play a central role, whose elements are those finite words over $\A$ occurring in some $x\in X$. A language uniquely determines a shift space, or in other words, $x\in X$ if and only if any factor $\mu$ of $x$ is an element of $\La(X)$. This fact implies that for any two-sided shift space $Y$, $\sigma(Y)=Y$, and therefore for any one-sided shift space $X$, $\sigma(X)=X$ if and only if $X=(\underline{X})_+$. Any topologically transitive one-sided shift (for the definition of topologically transitivity, see Proposition \ref{3.3.2}) is automatically surjective since its image is a dense compact subset.
\begin{df}
Let $X$ be a one-sided shift space and $x\in X$. We define the {\it forward and backward orbits} of $x$ to be 
\[Orb^+_{\sigma}(x)=\{\sigma^n(x):n\geq0\}\ \ {\rm and}\ \ Orb^-_{\sigma}(x)=\{y\in X: \exists n>0(\sigma^n(y)=x)\}\]
respectively, and the whole orbit of $x$ to be $Orb_\sigma(x)=Orb^+_{\sigma}(x)\cup Orb^-_{\sigma}(x)$.
\end{df}

\subsection{$C^*$-algebras and groupoids}
\begin{df}[cf. \cite{WZ}, Definition 2.1]
Let $A$ and $B$ be $C^*$-algebras. A $*$-homomorphism $\pi: A\to B$ is said to have {\it nuclear dimension} at most $n$, denoted ${\rm dim}_{\rm nuc}(\pi)\leq n$, if for any finite set $\F\subset A$ and $\varepsilon>0$, there is a finite-dimensional subalgebra $F$ and completely positive maps $\psi: A\to F$ and $\phi: F\to B$ such that $\psi$ is contractive, $\phi$ is $n$-decomposable in the sense that we can write
\[F=F^{(0)}\oplus F^{(1)}\oplus\cdots F^{(n)}\]
satisfying $\phi|_{F^{(i)}}$ is c.p.c order zero for all $i$, and for every $a\in\F$, 
\[\|\pi(a)-\phi\psi(a)\|<\varepsilon.\]
The {\it nuclear dimension} of a $C^*$-algebra $A$, denoted ${\rm dim}_{\rm nuc}(A)$, is defined as the nuclear dimension of the identity homomorphism ${\rm id}_A$.
\end{df}
We now recall the definitions of groupoid and its dynamic asymptotic dimension. 
\begin{df}[cf. \cite{SW}, (3.1)]
Let $X$ be a local homeomorphism on a compact Hausdorff space $X$. We then obtain a dynamical system $(X, T)$.  The corresponding {\it Deaconu-Renault Groupoid} is defined to be the set
\[\mathcal{G}_{X}=\{(x,m-n,y)\in X\times \Z\times X: T^m(x)=T^n(y), m,n\in\N\},\]
with the unit space $\mathcal{G}_{X}^0=\{(x,0,x): x\in X\}$ identified with $X$, range and source maps $r(x,n,y)=x$ and $s(x,n,y)=y$, and operations $(x,n,y)(y,m,z)=(x,n+m,z)$ and $(x,n,y)^{-1}=(y,-n,x)$.
\end{df}
By Lemma 2.3 in \cite{BC}, Lemma 3.1 and Lemma 3.5 in \cite{SW}, the groupoid $\mathcal{G}_{\tildeX}$ considered in the paper will all be locally compact, Hausdorff, amenable and $\acute{\rm e}$tale, where $\tildeX$ is the cover of $X$ in the sense of Definition \ref{3.4.3}. They are also principal since all such $\tildeX$ have no periodic point, as is shown in Section \ref{sec:4}.

The {\it Cuntz-Pimsner $C^*$-algebra} $\mathcal{O}_X$ of a one-sided shift space $X$ is defined to be the (full) groupoid $C^*$-algebra $C^*(\mathcal{G}_{\tildeX})$. The {\it diagonal-subalgebra} $\mathcal{D}_X$ is defined to be $C(\tildeX)\subset \mathcal{O}_X$.

Finally we recall the definition of dynamic asymptotic dimension for $\acute{\rm e}$tale groupoids.
\begin{df}[cf. \cite{GWY}, Definition 5.1]
Let $\mathcal{G}$ be an $\acute{\rm e}$tale groupoid. Then $\mathcal{G}$ has {\it dynamic asymptotic dimension} $d\in\N$ if $d$ is the smallest number with the following property: for every open relatively compact subset $K$ of $\mathcal{G}$ there are open subsets $U_0, U_1,\cdots,U_d$ of $\mathcal{G}^0$ that covers $s(K)\cup r(K)$ such that for each $i$, the set $\{g\in K: s(g),r(g)\in U_i\}$ is contained in a relatively compact subgroupoid of $\mathcal{G}$.
\end{df}
It is known that for a minimal $\Z$-action on a compact space, the associated groupoid has dynamic asymptotic dimension $1$, see Theorem 3.1 in \cite{GWY}.

\section{definitions and preparations}\label{sec:3}
From now on, to avoid invalidity or triviality, we only consider infinite one-sided shift space $X$ with $\sigma(X)=X$. We use $\underline{X}$ to denote the associated two-sided shift space.
\subsection{Left special elements and past equivalence}
\begin{df}[cf. \cite{CE}, 2.2]
Let $X$ be a one-sided shift space and $z\in \underline{X}$. We say that $z$ is {\it left special} if there exists $z'\in \underline{X}$ such that $z_{-1}\neq z'_{-1}$ and $z_{[0,\infty)}=z'_{[0,\infty)}$. If $z\in\underline{X}$ is left special, we also say $x=z_{[0,\infty)}$ is {\it left special} in $X$. We use $Sp_l(\X)$ and $Sp_l(X)$ to denote the collections of left special elements in $\X$ and $X$ respectively.

We say $x\in X$ {\it has a unique past} if $\#(\sigma^{k})^{-1}(\{x\})=1$ for all $k\geq1$. Moreover, we say $x\in X$ {\it has a totally unique past} if $\sigma^n(x)$ has a unique past for all $n\geq1$.
\end{df}
It is clear from the definition that for any one-sided shift space $X$ with $\sigma(X)=X$, a point $x\in X$ is left special precisely when $x$ has at least two preimages under $\sigma$, that is, $\#\sigma^{-1}(\{x\})\ge2$. Therefore for any such one-sided shift on an infinite space, left special elements always exist, or $\sigma$ will be injective which implies that  $X$ is finite. It is also immediate that $x$ has a totally unique past if and only if $x\notin Orb_\sigma(\omega)$ for any $\omega\in Sp_l(X)$.
\begin{prop}\label{3.1.2}
Suppose that $Sp_l(X)$ contains no periodic point of $X$. Then $\#Sp_l(\X)<\infty$ if and only if $\#Sp_l(X)<\infty$.
\end{prop}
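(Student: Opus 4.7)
The plan is to identify $Sp_l(\X)$ with the preimage $\pi^{-1}(Sp_l(X))$ under the canonical projection $\pi\colon\X\to X$, $z\mapsto z_{[0,\infty)}$, and then to use the non-periodicity hypothesis to show that each fibre $\pi^{-1}(x)$ is finite.

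First I would verify $Sp_l(\X)=\pi^{-1}(Sp_l(X))$, which in particular yields $Sp_l(X)=\pi(Sp_l(\X))$. The inclusion $\subseteq$ is immediate from the definition: a witness $z'$ of $z$ being left special produces two preimages of $\pi(z)$ in $X$ under $\sigma$ with distinct first letters. For $\supseteq$, given $x\in Sp_l(X)$ and $z\in\pi^{-1}(x)$, pick $a\in\A$ with $a\neq z_{-1}$ and $ax\in X$; the standing assumption $\sigma(X)=X$ lets me extend $ax$ backwards to an element $z'\in\X$ which witnesses the left-specialness of $z$. From this identity the forward implication of the proposition becomes trivial: $\#Sp_l(X)=\#\pi(Sp_l(\X))\leq\#Sp_l(\X)<\infty$, and this direction does not even require the non-periodicity hypothesis.

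For the converse, I fix $x\in Sp_l(X)$ and bound $\#\pi^{-1}(x)$. I identify $\pi^{-1}(x)$ with the set of infinite branches of the rooted tree $\mathcal{T}_x$ whose level-$n$ nodes are $\sigma^{-n}(x)\cap X$ and whose parent map is $\sigma$. A node $y$ has at most $\#\A$ children, and has $\geq 2$ children precisely when $y\in Sp_l(X)$; hence every branching node of $\mathcal{T}_x$ lies in the finite set $Sp_l(X)\cap\bigl(\{x\}\cup Orb^-_{\sigma}(x)\bigr)\subseteq Sp_l(X)$.

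The key use of the hypothesis is the following: since $x$ is not periodic (being an element of $Sp_l(X)$, which by assumption avoids periodic points), each branching node $y$ sits at a unique level $n(y)$ of $\mathcal{T}_x$---otherwise $\sigma^{n_1}(y)=x=\sigma^{n_2}(y)$ with $n_1<n_2$ would force $\sigma^{n_2-n_1}(x)=x$. Therefore $N:=\max\{n(y):y\text{ is a branching node}\}<\infty$, no branching occurs beyond level $N$, and $\pi^{-1}(x)$ contains at most $(\#\A)^{N+1}$ elements. Summing over the finite set $Sp_l(X)$ then gives $\#Sp_l(\X)<\infty$. The main obstacle, and the only real use of the hypothesis, is precisely this bound on $N$: without excluding periodic points, a single element of $Sp_l(X)$ could reappear as a branching node at infinitely many levels of $\mathcal{T}_x$, allowing $\pi^{-1}(x)$ to be (typically uncountably) infinite.
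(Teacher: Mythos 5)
Your proof is correct and rests on the same mechanism as the paper's: branching in the backward orbit tree of a left special point $x$ can only occur at points of the finite set $Sp_l(X)$, and the absence of periodic points forces each such point to occur at a unique depth, so each fibre $\pi^{-1}(x)$ is finite. The paper organizes this contrapositively --- an infinite fibre yields, via an iterated pigeonhole extraction, infinitely many branching depths and hence a periodic element of $Sp_l(X)$ --- but the underlying argument is the same, and your direct fibre-counting version (together with the explicit identification $Sp_l(\X)=\pi^{-1}(Sp_l(X))$) is if anything a cleaner write-up of it.
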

\begin{proof}
The map $\pi_+: z\mapsto z_{[0,\infty)}$ induces a surjective map from $Sp_l(\X)$ to $Sp_l(X)$. Therefore, if $Sp_l(\X)$ is finite, so is $Sp_l(X)$. 

Now assume that $Sp(\X)$ is infinite. If $Sp(X)$ is finite, then we can take $x\in Sp_l(X)$ with infinitely many preimages in $Sp_l(\X)$ under $\pi_+$. Denote this infinite preimage by $F$. Since $\A$ is finite, the Pigeonhole principle ensures the existence of an infinite subset $F_1\subset F$ such that for every $x\neq y\in F_1$, $x_{[-1,\infty)}=y_{[-1,\infty)}$. Then we  choose $n_1\leq-1$ such that there exists $x^1,y^1\in F_1$ with 
\[(x^1)_{n_1-1}\neq(y^1)_{n_1-1}\ {\rm but}\ x_{[n_1\infty)}=y_{[n_1,\infty)}\ {\rm for\ all\ }x,y\in F_1.\]
This means that there is some $z^1\in F_1$ such that $(z^1)_{[n_1,\infty)}\in Sp_l(X)$. Similarly, choose an infinite subset $F_2\subset F_1$, an integer $n_2\leq n_1-1$ with the same property as the first step and a point $z^2\in F_2$ such that $(z^2)_{[n_2,\infty)}\in Sp_l(X)$. Repeating this procedure, we have a strictly decreasing sequence of negative integers $\{n_k\}_{k\geq1}$ and an infinite sequence $\{z^k\}_{k\geq1}\subset Sp_l(\X)$ with the following property:
\[(z^k)_{[n_k,\infty)}\in Sp_l(X)\ {\rm and}\ (z^k)_{[n_k,\infty)}=(z^{k+1})_{[n_k,\infty)}\ (k=1,2,\cdots).\]
Note that it follows from the latter condition that $(z^k)_{[n_k,\infty)}$ all lie on a single orbit in $X$. Since $Sp_l(X)$ is finite, it has to contain a periodic point, which is a contradiction.
\end{proof}
{\bf Notation.} Let $S\subset X$ be a set and $l\in\N$. We define $S_{[0,l]}$ to be the set whose elements are the prefixes of $x\in S$ of length $l+1$.
\begin{df}[cf. \cite{CE}, 2.4]
Let $X$ be a one-sided shift space and $l\geq1$. For $x\in X$, set
\[P_l(x)=\{\mu\in\La(X): |\mu|=l, \mu x\in X\}=(\sigma^{l})^{-1}(\{x\})_{[0,l-1]}.\]
For $x,y\in X$, we say $x$ and $y$ are {\it $l$-past equivalent} and write $x\sim_l y$, if $P_l(x)=P_l(y)$. In particular, $x$ and $y$ are said to be {\it past equivalent} if $x\sim_ly$ for some $l\geq1$. 

We call $x$ {\it isolated in past equivalence} if there exists $l\geq1$ such that $x\sim_ly$ implies $x=y$.
\end{df}
 If $x\sim_{l+1}y$, then $x\sim_ky$ for all $1\leq k\leq l$. Consequently, if $x$ is isolated in $l$-past equivalent, then $x$ is isolated in $k$-past equivalent for every $k\geq l$.
\begin{lem}\label{3.1.4}
Suppose that $x\in X$ has a unique past. Then for every $l\geq1$, there exists $N\in\N$ such that, whenever $y\in X$ with $y_{[0,N]}=x_{[0,N]}$, $\#P_l(y)=1$.
\end{lem}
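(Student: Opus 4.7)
The plan is to argue by contradiction using compactness of $\A^l$ together with the closedness of $X$. Suppose the conclusion fails for some fixed $l \geq 1$. Then for every $N \in \N$ there is some $y^N \in X$ with $y^N_{[0,N]} = x_{[0,N]}$ but $\#P_l(y^N) \geq 2$, so I can pick two distinct words $\mu^N, \nu^N \in \A^l$ satisfying $\mu^N y^N, \nu^N y^N \in X$ with $\mu^N \neq \nu^N$.

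Since $\A^l$ is a finite set, the pigeonhole principle lets me pass to a subsequence along which $\mu^N = \mu$ and $\nu^N = \nu$ are constant, with $\mu \neq \nu$ still. Moreover, from $y^N_{[0,N]} = x_{[0,N]}$ and the definition of the metric $d$ on $\A^\N$, I get $y^N \to x$ as $N \to \infty$. Concatenation with a fixed word of length $l$ is continuous on $\A^\N$, so $\mu y^N \to \mu x$ and $\nu y^N \to \nu x$. As $X$ is compact, hence closed in $\A^\N$, both limits lie in $X$, giving $\mu x, \nu x \in X$.

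Now $\sigma^l(\mu x) = \sigma^l(\nu x) = x$, and $\mu \neq \nu$ forces $\mu x \neq \nu x$. Therefore $\#(\sigma^l)^{-1}(\{x\}) \geq 2$, contradicting the hypothesis that $x$ has a unique past. This contradiction proves the existence of the required $N$.

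The argument is essentially a routine compactness/pigeonhole argument, so I do not anticipate any genuine obstacle; the only bookkeeping care needed is to translate between the description of $P_l$ as a set of prefixes of preimages and the concrete statement that $\mu x$ and $\nu x$ are distinct elements of $\sigma^{-l}(\{x\})$.
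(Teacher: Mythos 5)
Your proof is correct and follows essentially the same route as the paper's: contradiction, pigeonhole over the finite set of length-$l$ words, convergence $y^N\to x$, and closedness of $X$ to conclude $\mu x,\nu x\in X$, contradicting the unique past. Your pigeonhole step (applied to the pair $(\mu^N,\nu^N)$ at once) is in fact a slightly cleaner bookkeeping than the paper's extraction of two separate subsequences, but the argument is the same in substance.
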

\begin{proof}
Assume that there exists $l_0\geq1$ such that for every $n\in\N$, we can always find some $y^n\in X$ with $y^n_{[0,n]}=x_{[0,n]}$ but $\#P_{l_0}(y^n)\ge2$. We are then given a sequence $\{y^n\}_{n\geq0}$ which is easily seen to converge to $x$ as $n\to\infty$. 

Note that the alphabet $\A$ is finite, we now claim that there exists two distinct words $\mu,\nu$ in $\La(X)$ of length $l_0$ such that two sequences of natural numbers $\{n_k\}_{k\ge0}$ and $\{m_k\}_{k\ge0}$ can be chosen, satisfying
\[\mu y^{n_k}\in X\ \ {\rm and}\ \ \nu y^{m_k}\in X.\]
In fact, from the Pigeonhole principle, there is at least one word $\mu$ with $|\mu|=l_0$ such that $\mu$ can be a  prefix of infinitely many $y^n$, say, $y^{n_k}$ for $k\geq1$. However, if $\mu$ is the unique word with such property, then all others in $\La(X)$ with length $l_0$ can only be prefixes of finitely many of $y^n$, and which means that for some natural number $N$, $y^n$ will only have the unique prefix $\mu$ whenever $n\geq N$. This is then a contradiction.

Finally, note that since $y^n\to x$ as $n\to\infty$, every finite word occurring in $\mu x$ and $\nu x$ is an element of $\La(X)$. This proves $\mu x, \nu x\in X$ and hence $x$ does not have a unique past.
\end{proof}

\subsection{Right tail equivalence and $\mathfrak{j}$-maximal elements}

 \begin{df}[cf. \cite{CE}, 2.2 (a slightly different version)]
 Let $x,x'\in X$. The notation $x\sim_{rte}x'$ is used to mean $x$ and $x'$ are {\it right tail equivalent},  in the sense that there exist $M, M'\in \N$ satisfying
 \[\sigma^M(x)=\sigma^{M'}(x').\]
Set $\J_X=Sp_l(X)/\sim_{rte}$. Let $\mathfrak{j}\in\J_X$ and $\omega\in\mathfrak{j}$. We say $\omega$ is {\it $\jf$-maximal} if for any $\omega'\in\jf$, there is an $m\in\N$ such that $\sigma^m(\omega')=\omega$.
 \end{df}
 \begin{prop}\label{3.2.2}
 Suppose that $Sp_l(X)$ is finite and contains no periodic point of $X$. Then every $\jf\in\J_X$ has a unique $\jf$-maximal element. In particular, an element $\omega\in Sp_l(X)$ is $\jf$-maximal if and only if 
 \[\omega\in\jf\ {\rm and}\ \sigma^m(\omega)\notin Sp_l(X)\ {\rm for\ all}\ m\in\N\setminus\{0\}.\]
 \end{prop}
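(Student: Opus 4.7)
The plan is to first prove a concrete existence statement: for every $\omega_0\in\jf$, the forward orbit of $\omega_0$ meets $Sp_l(X)$ in only finitely many points, so one may define $\omega:=\sigma^N(\omega_0)$ with $N:=\max\{n\in\N:\sigma^n(\omega_0)\in Sp_l(X)\}$. Finiteness of this set follows by a pigeonhole argument: if infinitely many iterates landed in the finite set $Sp_l(X)$, two of them would have to coincide, producing a periodic point in $Sp_l(X)$, contradicting the hypothesis. By construction, $\omega\in\jf\cap Sp_l(X)$ (right-tail equivalence to $\omega_0$ is witnessed by $\sigma^N(\omega_0)=\sigma^N(\omega_0)$) and $\sigma^m(\omega)\notin Sp_l(X)$ for every $m\geq 1$.

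The next step, and the main obstacle, is uniqueness: if $\omega_1,\omega_2\in\jf$ both satisfy $\sigma^m(\cdot)\notin Sp_l(X)$ for all $m\geq 1$, then $\omega_1=\omega_2$. By right-tail equivalence choose $M_1,M_2\in\N$ with $\sigma^{M_1}(\omega_1)=\sigma^{M_2}(\omega_2)$ and the sum $M_1+M_2$ minimal. If exactly one of $M_1,M_2$ is zero, say $M_2=0$ and $M_1\geq 1$, then the common point $\sigma^{M_1}(\omega_1)=\omega_2$ is simultaneously outside $Sp_l(X)$ (by the assumption on $\omega_1$) and inside $Sp_l(X)$ (as it equals $\omega_2$), an immediate contradiction. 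If both $M_1,M_2\geq 1$, then minimality forces $\sigma^{M_1-1}(\omega_1)\neq\sigma^{M_2-1}(\omega_2)$ while these two distinct points share the common image $\sigma^{M_1}(\omega_1)=\sigma^{M_2}(\omega_2)$ under $\sigma$; hence this common image has at least two $\sigma$-preimages in $X$, so it lies in $Sp_l(X)$, contradicting $M_1\geq 1$ again. Thus $M_1=M_2=0$, giving $\omega_1=\omega_2$. The delicate point here is that $\sigma$ is not in general injective on $X$, so one cannot naively cancel powers of $\sigma$; the minimality of $M_1+M_2$ is exactly what converts non-cancellation into left-specialness.

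Combining the two steps yields the proposition. For an arbitrary $\omega'\in\jf$, apply the existence construction to $\omega'$ to obtain $\tilde{\omega}=\sigma^{N'}(\omega')$ enjoying the same ``no forward left-special point'' property as $\omega$; uniqueness forces $\tilde{\omega}=\omega$, so $\sigma^{N'}(\omega')=\omega$, proving that $\omega$ is $\jf$-maximal and incidentally that the $\jf$-maximal element is unique. For the characterization, the converse direction is immediate from existence plus uniqueness: any $\omega\in\jf$ satisfying $\sigma^m(\omega)\notin Sp_l(X)$ for all $m\geq 1$ must coincide with the already-constructed $\jf$-maximal element. The forward direction is a short contradiction: if $\omega$ is $\jf$-maximal but $\sigma^m(\omega)\in Sp_l(X)$ for some $m\geq 1$, then $\sigma^m(\omega)\in\jf$, so by maximality some $k\in\N$ gives $\sigma^{k+m}(\omega)=\omega$ with $k+m\geq 1$, making $\omega$ a periodic point in $Sp_l(X)$, which is forbidden.
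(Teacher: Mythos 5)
Your proof is correct and takes essentially the same route as the paper's: a pigeonhole argument on the finite, periodic-point-free set $Sp_l(X)$ produces the last left-special point on a forward orbit, and a minimal-meeting-time argument shows that a positive minimum would give the meeting point two distinct $\sigma$-preimages, making it left special and contradicting the choice. The only difference is organizational --- you first prove the symmetric uniqueness of elements of $\jf$ with no forward left-special iterates and then deduce maximality, whereas the paper argues maximality of the constructed point directly --- and your explicit treatment of the case where exactly one of $M_1,M_2$ vanishes is, if anything, slightly more careful than the paper's.
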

 \begin{proof}
 Let $\eta\in\jf$ be arbitrary. Since $Sp_l(X)$ is finite and contains no periodic point, we can take $K\in\N$ such that $\sigma^K(\eta)\in Sp_l(X)$ but $\sigma^k(\eta)\notin Sp_l(X)$ for all $k\geq K+1$. Denote $\sigma^K(\eta)$ by $\omega$. We prove the proposition by showing that $\omega$ is $\jf$-maximal. 
 
 Let $\omega'\in\jf\setminus\{\omega\}$. Since $\omega\sim_{rte}\omega'$, there are $M,M'\in\N$ such that
 \[\sigma^M(\omega)=\sigma^{M'}(\omega').\]
 Take the minimal nonnegative integer $M$ so that there is $M'\in\N$ with $\sigma^M(\omega)=\sigma^{M'}(\omega')$. If $M>0$, then $\omega_{M-1}\neq w'_{M'-1}$, which means that $\omega_{[M,\infty)}=\omega'_{[M',\infty)}$ is left special. Note that $\omega_{[M,\infty)}=\sigma^{k+M}(\eta)$. However, this contradicts to the assumption that $\sigma^k(\eta)\notin Sp_l(X)$ for all $k\geq K+1$. Consequently, $M=0$, in other words, $\omega=\sigma^{M'-1}(\omega')$. This proves the existence of $\jf$-maximal elements. 
 
 The uniqueness follows directly from the absence of periodic point in $Sp_l(X)$. Finally, the above argument  verifies the second assertion at the same time.
 \end{proof}
 \begin{df}
 Let $X$ be a one-sided shift space with finitely many left special elements.  From now on,  for any $\jf\in\J_X$, we will always denote the unique $\jf$-maximal element by $\omega_\jf$. For every $\jf\in\J_X$, define
 \[U_\jf=\{\omega\in\jf: Orb_\sigma^-(\omega)\cap \jf=\varnothing\}.\]
 \end{df}
 Note that for all $\omega\in U_\jf$, $\af\omega$ has a unique past whenever $\af\omega\in X$ for some $\af\in\A$.

 \begin{lem}\label{3.2.4}
 Suppose $Sp_l(X)$ is finite and contains no periodic point. For every $\omega\in Sp_l(X)$, there is $N\in\N$ such that $\sigma^n(\omega)$ is isolated in $l$-past equivalence for all $l>n\geq N$.
 \end{lem}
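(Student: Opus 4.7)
I plan to first reduce to the case where $\omega$ is the $\jf$-maximal element of its right tail equivalence class $\jf \in \J_X$. By Proposition \ref{3.2.2}, any $\omega \in Sp_l(X)$ satisfies $\omega_\jf = \sigma^M(\omega)$ for some $M \in \N$, so $\sigma^n(\omega) = \sigma^{n-M}(\omega_\jf)$ for all $n \geq M$; a bound $N'$ that works for $\omega_\jf$ therefore yields the bound $N = N' + M$ for $\omega$. Hence I would work under the assumption that $\omega$ is $\jf$-maximal itself, in which case Proposition \ref{3.2.2} gives $\sigma^k(\omega) \notin Sp_l(X)$ for all $k \geq 1$.

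The heart of the argument is the following factorization, which I would establish for every $l > n \geq 1$:
\[
P_l(\sigma^n(\omega)) = \{\nu \cdot \omega_{[0, n-1]} : \nu \in P_{l-n}(\omega)\}.
\]
To prove it I would trace preimages back from $\sigma^n(\omega)$ one step at a time. Since each $\sigma^k(\omega)$ with $1 \leq k \leq n$ lies outside $Sp_l(X)$, it has exactly one preimage under $\sigma$ (using $\sigma(X) = X$), and an induction shows $(\sigma^n)^{-1}(\sigma^n(\omega)) = \{\omega\}$. Consequently any $y' \in X$ with $\sigma^l(y') = \sigma^n(\omega)$ must satisfy $\sigma^{l-n}(y') = \omega$, so the last $n$ letters of $\mu = y'_{[0,l-1]}$ are forced to be $\omega_{[0,n-1]}$, while the remaining length-$(l-n)$ prefix is precisely an arbitrary element of $P_{l-n}(\omega)$.

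Now suppose $y \sim_l \sigma^n(\omega)$. Every $\mu \in P_l(y) = P_l(\sigma^n(\omega))$ ends in $\omega_{[0, n-1]}$. Since each element of $P_n(y)$ extends to the left to an element of $P_l(y)$ (every point of $X$ has arbitrary length left extensions because $\sigma(X) = X$), this forces $P_n(y) = \{\omega_{[0, n-1]}\}$; in particular $z := \omega_{[0, n-1]} y \in X$. On the other hand, for each $\nu \in P_{l-n}(\omega)$ the factorization gives $\nu \omega_{[0, n-1]} \in P_l(y)$, so $\nu z \in X$, yielding $P_{l-n}(\omega) \subseteq P_{l-n}(z)$. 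Projecting onto last letters gives $P_1(\omega) \subseteq P_1(z)$, and since $\omega \in Sp_l(X)$ satisfies $\#P_1(\omega) \geq 2$, we conclude $z \in Sp_l(X)$.

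To close the proof, I would use that $Sp_l(X)$ is finite with pairwise distinct elements to choose $N \geq 1$ large enough that $\omega_{[0, N-1]}$ agrees with no other element of $Sp_l(X)$ on its first $N$ coordinates. For $n \geq N$, the conditions $z \in Sp_l(X)$ and $z_{[0, n-1]} = \omega_{[0, n-1]}$ then force $z = \omega$, whence $y = \sigma^n(\omega)$. This shows $\sigma^n(\omega)$ is isolated in $l$-past equivalence for every $l > n \geq N$. The main obstacle I anticipate is establishing the factorization of $P_l(\sigma^n(\omega))$ cleanly, since it depends sensitively on the $\jf$-maximality assumption; once it is in place, the remainder is bookkeeping over the finite set $Sp_l(X)$.
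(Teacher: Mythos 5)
Your proof is correct and follows essentially the same route as the paper: reduce to the $\jf$-maximal element, use that $\sigma^k(\omega)$ has a unique preimage for $k\geq 1$ to pin down the structure of $P_l(\sigma^n(\omega))$, and then invoke finiteness of $Sp_l(X)$ to choose a separation constant $N$ so that the left special element $z=\omega_{[0,n-1]}y$ produced from any $l$-past-equivalent $y$ must equal $\omega$. The only difference is one of exposition: the paper asserts the isolation in a single sentence, whereas you make explicit the factorization $P_l(\sigma^n(\omega))=\{\nu\,\omega_{[0,n-1]}:\nu\in P_{l-n}(\omega)\}$ that the paper leaves implicit.
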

 \begin{proof}
 Let $\omega\in Sp_l(X)$. From Proposition \ref{3.2.2}, let $m\in\N$ be such that $w_\jf=\sigma^m(\omega)$ is $\jf$-maximal for some $\jf\in\J_X$. Since $Sp_l(X)$ is finite, there exists $N'\in\N$ with the following property: 
\[{\rm for\ all}\ y,y'\in X, y,y'\in Sp_l(X)\ {\rm and}\ y_{[0,N']}=y'_{[0,N']}\ {\rm implies}\ y=y'.\]
Let $N=N'+m$. Then $\sigma^N(\omega)$ is isolated in $N'+1$-past equivalence, and therefore for every $l>n\geq N$, $\sigma^n(\omega)$ is isolated in $l$-past equivalence as well.
 \end{proof}

 \subsection{Property (*) and (**)}
\begin{df}[cf. \cite{CE}, Definition 3.1]
A one-sided shift space $X$ {\it has property (*)} if for every $\mu\in\La(X)$, there exists $x\in X$ such that $P_{|\mu|}(x)=\{\mu\}$. We will also say $\X$ {\it has property (*)} if $X$ does so.
\end{df}
\begin{prop}\label{3.3.2}
Let $X$ be a one-sided shift space. Suppose that $X$ is topologically transitive, namely, there is a point $x^0\in X$ such that its forward orbit is dense in $X$. If $Sp_l(X)$ is finite and contains no periodic point in $X$, then $X$ has property (*).
\end{prop}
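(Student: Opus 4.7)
The plan is to produce, given $\mu \in \La(X)$, a point $y \in X$ with $\mu y \in X$ and $\#P_{|\mu|}(y) = 1$, which together force $P_{|\mu|}(y) = \{\mu\}$. The main observation driving everything is that multiple length-$l$ pasts of $y$ force $y$ to be a short forward iterate of a left special element: if $\alpha \neq \beta$ are two length-$l$ pasts of $y$ and $j$ is the largest index with $\alpha_j \neq \beta_j$, then the common tail $\omega := \alpha_{[j+1,\, l-1]} y = \beta_{[j+1,\, l-1]} y$ admits the two distinct one-letter preimages $\alpha_j \omega,\, \beta_j \omega \in X$, so $\omega \in Sp_l(X)$ and $y = \sigma^{\,l-1-j}(\omega)$ with $0 \leq l-1-j \leq l-1$. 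Consequently $\#P_l(y) = 1$ as soon as $y$ avoids the set
\[
B_l \;:=\; \{\sigma^k(\omega) : \omega \in Sp_l(X),\ 0 \leq k \leq l-1\},
\]
which is finite by the hypothesis $\#Sp_l(X) < \infty$.

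Topological transitivity then supplies the remaining input. Because $X$ is infinite and $x^0$ has dense forward orbit, $x^0$ cannot be eventually periodic (otherwise the orbit, and hence $X$, would be finite), so the map $n \mapsto \sigma^n(x^0)$ is injective. Density of the orbit together with the existence of some $w \in X$ whose initial length-$|\mu|$ word is $\mu$ (which exists because $\mu \in \La(X)$) yields at least one $n$ with $x^0_{[n,\, n + |\mu| - 1]} = \mu$, and a standard cofinality argument — applying the same density to shifts $\sigma^N(x^0)$, whose forward orbits remain dense in the infinite transitive space $X$ — upgrades this to infinitely many such $n$. Hence the set $N_\mu := \{n \geq 0 : x^0_{[n,\, n + |\mu| - 1]} = \mu\}$ is infinite.

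To finish, the injectivity of $n \mapsto \sigma^n(x^0)$ combined with the finiteness of $B_{|\mu|}$ guarantees that only finitely many $n$ satisfy $\sigma^{n + |\mu|}(x^0) \in B_{|\mu|}$. Choosing $n \in N_\mu$ with $y := \sigma^{n + |\mu|}(x^0) \notin B_{|\mu|}$, we obtain $\mu y = x^0_{[n,\, \infty)} \in X$ and, by the opening observation, $\#P_{|\mu|}(y) = 1$, whence $P_{|\mu|}(y) = \{\mu\}$, as required for property (*). The step I expect to require the most care is the branching observation of the first paragraph — pinpointing the correct common tail $\omega$ and verifying it is genuinely left special, in a setting where one must also invoke the standing assumption $\sigma(X) = X$ to freely extend finite pasts — but this is bookkeeping rather than a substantive obstacle, and the avoidance step in the last paragraph is a one-line counting argument.
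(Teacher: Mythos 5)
Your proof is correct and follows essentially the same route as the paper: pick a transitive point $x^0$, note that $\mu$ occurs in it infinitely often, and use the finiteness of $Sp_l(X)$ together with the injectivity of $n\mapsto\sigma^n(x^0)$ to locate an occurrence of $\mu$ whose landing point has a unique length-$|\mu|$ past. The only cosmetic difference is that you make the landing point avoid the finite set $B_{|\mu|}$ of short forward iterates of left special elements, whereas the paper places the occurrence beyond the last visit of the orbit to $Sp_l(X)$ and composes unique one-letter preimages; both versions likewise take for granted (as the paper does) the standard fact that a transitive point of a surjective infinite shift enters every cylinder infinitely often.
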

\begin{proof}
Actually, the proof is basically the same as that of Example 3.6 in \cite{CE} for the minimal case, which goes like follows. Since $X$ is transitive, take $x^0\in X$ with a dense forward orbit, which follows that every word in $\La(X)$ occurs in $x^0$. Therefore it suffices to show that, for every word $\mu$ occurring in $x^0$, there exists $y^0$ such that $P_{|\mu|}(y^0)=\{\mu\}$. Now since $x^0$ is a transitive point, $\mu$ appears in $x^0$ infinitely many times. Consider the intersection
\[Orb^+_{\sigma}(x^0)\cap Sp_l(X).\]
Since $Sp_l(X)$ is finite and contains no periodic point, this intersection has to be finite, which means that there exists $N\geq1$ such that $\sigma^{n}(x^0)\notin Sp_l(X)$ for all $n\geq N$. This follows that for all $n\geq N$, $\sigma^n(x^0)$ has only one preimage. Upon taking $L>N+|\mu|$ with $(x^0)_{[L-|\mu|+1,L]}=\mu$, we conclude that $\sigma^{L+1}(x^0)$ has only one preimage of length $|\mu|$, and which is exactly $\mu$.
\end{proof}

\begin{df}[cf. \cite{CE}, Definition 3.2]
Let $X$ be a one-sided shift space with property (*). If, in addition, $Sp_l(X)$ is finite and contains no periodic point in $X$, then we say $X$ {\it has property (**)} 
\end{df}
The Proposition \ref{3.3.2} together with Proposition \ref{3.1.2} implies the following corollary.
\begin{cor}
A transitive one-sided shift space $X$ has property (**) if and only if $Sp_l(X)$ is finite and contains no periodic point. In particular, if $X$ is minimal, then $X$ has property (**) exactly when $Sp_l(X)$ is finite.
\end{cor}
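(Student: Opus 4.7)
The plan is to chain together the definitions and the two preceding results (Proposition 3.3.2 and Proposition 3.1.2), with the minimal case relying on the standing assumption that $X$ is infinite.

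First I would handle the forward direction of the main equivalence, which is essentially tautological: property (**) is defined (Definition 3.3.3) to include the hypothesis that $Sp_l(X)$ is finite and contains no periodic point, so there is nothing to check. For the reverse direction, assume $X$ is transitive and $Sp_l(X)$ is finite with no periodic point. Then Proposition 3.3.2 applies verbatim and yields property (*), which combined with the standing assumption on $Sp_l(X)$ gives property (**) by Definition 3.3.3.

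For the \emph{in particular} statement, the only additional input is that in the minimal setting the no-periodic-point hypothesis is free. Here I would argue: if $X$ is minimal and contains a periodic point $x$ of period $p$, then $Orb_\sigma(x)$ is a finite, closed, $\sigma$-invariant subset of $X$, whence by minimality $X=Orb_\sigma(x)$ is finite; this contradicts the standing assumption (stated at the top of Section 3) that all shift spaces under consideration are infinite. Hence $X$ has no periodic points at all, so a fortiori $Sp_l(X)$ contains none. Since minimal systems are transitive, the main equivalence then collapses to: $X$ has property (**) if and only if $Sp_l(X)$ is finite.

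There is no real obstacle here; this is a clean bookkeeping corollary. The only subtle point is invoking the blanket assumption that $X$ is infinite (and that $\sigma(X)=X$, so that transitivity is meaningful via a dense forward orbit, matching the hypothesis used in Proposition 3.3.2). Proposition 3.1.2 is actually not strictly needed for this corollary, though the author flags it presumably because it explains why finiteness of $Sp_l(X)$ can be freely transferred to $Sp_l(\underline{X})$ in the later sections; it may be cited as a parenthetical remark rather than used in the argument itself.
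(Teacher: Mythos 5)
Your proof is correct and matches the derivation the paper intends (the paper offers no written proof, simply asserting the corollary follows from Propositions \ref{3.3.2} and \ref{3.1.2}): the forward direction is definitional, the reverse direction is Proposition \ref{3.3.2}, and the minimal case uses only that an infinite minimal shift has no periodic points. Your observation that Proposition \ref{3.1.2} is not actually needed for the statement as written is also accurate, since the corollary concerns only $Sp_l(X)$ and not $Sp_l(\underline{X})$.
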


\begin{exm}
Every non-regular Toeplitz shifts has property (*), as is shown in \cite{CE}. 

We now prove that this is the case for every non-periodic Toeplitz shift. The  same notations as in \cite{W} will be used in the following proposition.
\end{exm}
\begin{prop}
Let $\eta$ be a non-periodic Toeplitz sequence. Then $X_\eta$ has property (*).
\end{prop}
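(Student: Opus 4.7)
My plan is, given any $\mu\in\La(X_\eta)$ of length $l$, to exhibit a point $x\in X_\eta$ with $P_l(x)=\{\mu\}$ by exploiting the Toeplitz hierarchy of $\eta$ to locate a suitably ``rigid'' occurrence of $\mu$, then cutting off a long enough right extension whose occurrences in $\eta$ are all confined to a single arithmetic progression modulo the relevant period. Since every Toeplitz shift is minimal, $\mu$ occurs in $\eta$, so I would start by fixing a position $n\geq l$ with $\eta_{[n-l,n-1]}=\mu$.

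\smallskip

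The argument then has two main steps. First, using that $\eta$ is Toeplitz, for each of the finitely many indices $i\in\{n-l,\dots,n-1\}$ there is a period $p_i$ with $i\in Per_{p_i}(\eta)$; taking the least common multiple produces a single period $p$ satisfying $\{n-l,\dots,n-1\}\subseteq Per_p(\eta)$, so that $\eta_{i+kp}=\eta_i$ whenever $i$ lies in this set and $k\geq 0$. Second, I aim to prove the key rigidity claim: for all sufficiently large $N$, every occurrence of the word $w:=\eta_{[n,n+N-1]}$ in $\eta$ starts at a position congruent to $n$ modulo $p$. Granting this, set $x:=\sigma^n(\eta)\in X_\eta$. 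Then $\mu x=\sigma^{n-l}(\eta)\in X_\eta$, so $\mu\in P_l(x)$. Conversely, if $\nu\in\La(X_\eta)$ has length $l$ and $\nu x\in X_\eta$, write $\nu x=\lim_k\sigma^{m_k}(\eta)$ in $X_\eta$; for all large $k$ we have $\eta_{[m_k-l,\,m_k+N-1]}=\nu w$, so $w$ occurs in $\eta$ at position $m_k$. The rigidity claim forces $m_k\equiv n\pmod{p}$, and combined with the first step this gives $\nu=\eta_{[m_k-l,m_k-1]}=\eta_{[n-l,n-1]}=\mu$. Hence $P_l(x)=\{\mu\}$.

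\smallskip

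The hard part will be establishing the rigidity claim. My intended approach is by contradiction: if for every $N$ some occurrence of $\eta_{[n,n+N-1]}$ were located at a position $m_N$ whose residue class modulo $p$ differs from that of $n$, then after passing to a subsequence one may assume all $m_N$ share a common residue $r\not\equiv n\pmod{p}$. The limit of $\sigma^{m_N}(\eta)$ in $X_\eta$ would then be a point of $X_\eta$ sharing the right-infinite tail of $\sigma^n(\eta)$ but with a different address in the period-$p$ skeleton of $\eta$. Iterating through the natural period hierarchy $p_1\mid p_2\mid\cdots$ (with $p_k\to\infty$ exactly because $\eta$ is non-periodic) and diagonalising, one expects to produce a non-trivial element of the topological centralizer of $\eta$, contradicting non-periodicity. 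Equivalently, the claim amounts to the factor map from $X_\eta$ onto its maximal equicontinuous factor $\varprojlim_k\Z/p_k\Z$ being injective at $\eta$, and I would extract the precise statement from the Toeplitz machinery of \cite{W}.
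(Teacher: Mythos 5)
Your strategy is sound and rests on the same underlying mechanism as the paper's proof: both arguments approximate the competing left-extension $\nu x$ by shifts $\sigma^{m_k}(\eta)$, pin down the residue class of $m_k$ modulo a period using the rigidity of the Toeplitz skeleton, and then read the letters of $\nu$ off the periodic part. The packaging differs. The paper works one letter at a time: for each position $m-1, m-2,\dots,0$ it uses the period of that single position, pigeonholes the residue $l$ of the approximating shifts, and argues from the maximality of the skeleton that $l=0$. You instead take one period $p$ (an lcm over the positions of the chosen occurrence of $\mu$) and one recognizability lemma --- long factors of $\eta$ occur only in a single residue class mod $p$ --- which disposes of all $|\mu|$ letters at once; this is the cleaner and more standard formulation (it is exactly the continuity of the factor map onto $\Z/p\Z$). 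One point of care, though: as literally stated your rigidity claim can fail. If the $p_i$ are arbitrary periods of the individual positions, their lcm $p$ need not be an \emph{essential} period, and when the $p$-skeleton is invariant under a translation by some $0<q<p$, occurrences of long factors of $\eta$ are only pinned down modulo the essential period dividing $p$, not modulo $p$ itself. The fix is to take each $p_i$ from a period structure $p_1\mid p_2\mid\cdots$ of essential periods, so that the lcm is simply the largest of them and is itself essential; this is what the paper implicitly does by borrowing Williams' notation $p_{m-1}$, and with that adjustment your claim is the standard lemma whose proof (two addresses for one point would force a nontrivial translation symmetry of the skeleton, contradicting essentiality) is the argument you sketch. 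Finally, the last step $\nu=\eta_{[m_k-l,m_k-1]}=\eta_{[n-l,n-1]}$ quietly uses $m_k\ge n$; this holds for large $k$ because the $m_k$ are unbounded unless $\nu x$ is itself a shift of $\eta$, a degenerate case one checks directly.
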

\begin{proof}
Let $\mu\in \La(X_\eta)$. Without loss of generality, assume $\eta_{[0, m-1]}=\mu$ for $m=|\mu|$. We show that 
\[P_{|\mu|}(\eta_{[m,\infty)})=\{\mu\}.\]
Suppose $\mu'\in\La(X_\eta)$ with $\mu'\eta_{[m,\infty)}\in X_\eta$. Then $\mu'\eta_{[m,\infty)}$ can be approximated by a sequence $\sigma^{n_k}(\eta)$. Write $\mu=\mu_1\mu_2\cdots\mu_{|\mu|}$. We then note that $\eta_{m-1}=\mu_{|\mu|}$. 

Consider the $p_{m-1}$-skeleton of $\eta$, say, $\tilde{\eta}\in(\A\cup\{\infty\})^\N$.
Then $\tilde{\eta}$ is a periodic sequence with period orbit $\{\tilde{\eta}, \sigma(\tilde{\eta}), \cdots, \sigma^{p_{m-1}-1}(\tilde{\eta})\}$. From the Pigeonhole principle, there is $0\leq l\leq p_{m-1}-1$ such that there exists infinitely many $n_{k_j}\,(j=1,2,\cdots)$ satisfying
\[n_{k_j}-(m-1)\equiv l\ {\rm mod}\ p_{m-1}\]
for some $l\in\{0,1,\cdots,p_{m-1}-1\}$, which follows $(\sigma^{n_{k_j}}(\eta))_n=\eta_{m-1}$ for all $n\in (l+m-1)+p_{m-1}\N$, and therefore
\[(\mu'\eta)_n=\eta_{m-1}\]
for all $n\in (l+m-1)+p_{m-1}\N$. Due to the fact that the $p$-skeleton of a given Toeplitz sequence is  the "maximal" periodic part with the given period, $\tilde{\eta}$ plays the central role. Hence, the assumption that $\mu'\eta_{[m,\infty)}$ and $\mu\eta_{[m,\infty)}$ has a common right infinite section yields that $l=0$. We then conclude that for all $n\in m-1+p_{m-1}\N$, 
\[(\mu'\eta)_n=\eta_{m-1}=\mu_m\]
and, in particular, $\mu'_{m}=\mu_m$. By repeatedly applying this procedure to $m-1,m-2,\cdots,0$, we therefore have $\mu'=\mu$.
\end{proof}

\begin{exm}\label{3.3.7}
Let $X$ be a one-sided shift. The {\it complexity function} $p_X$ is defined on positive integers, which sends every $n\geq1$ to the number of finite words in $\La(X)$ of length $n$. Namely,
\[p_X(n)=\#\{\mu\in\La(X): |\mu|=n\}.\]
We say that $X$ {\it has a bounded complexity growth} if there exists $K>0$ such that
\[p_X(n+1)-p_X(n)\leq K\]
for all $n\geq1$. Then every minimal one-sided shift space with a bounded complexity growth has property (**), as is shown in Proposition \ref{3.3.9}.

\begin{prop}\label{3.3.9}
If $X$ is a minimal one-sided shift space with a bounded complexity growth, then $X$ has property (**).
\end{prop}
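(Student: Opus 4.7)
The plan is to reduce property (**) to the finiteness of $Sp_l(X)$ via the preceding corollary, and then bound $\#Sp_l(X)$ using the classical inequality linking the first difference of the complexity function to the number of left special words. Since $X$ is minimal and (by the paper's standing convention) infinite, any periodic point would generate a finite, proper, closed, $\sigma$-invariant subset, contradicting minimality; hence $X$ contains no periodic point at all. By the corollary it therefore suffices to show $\#Sp_l(X) < \infty$.

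For each $\mu \in \La(X)$ with $|\mu|=n$, I will set $L(\mu) = \#\{a\in\A : a\mu \in \La(X)\}$. Since $\sigma(X)=X$, every $\mu\in\La(X)$ satisfies $L(\mu) \geq 1$: a point of $X$ in which $\mu$ occurs at some position $k$ can be shifted so that $\mu$ is a prefix, and that shifted point has a $\sigma$-preimage in $X$ which supplies a left extension. Partitioning the words of length $n+1$ by their length-$n$ suffix then gives $p_X(n+1) = \sum_{|\mu|=n} L(\mu)$, and therefore
\[
p_X(n+1)-p_X(n) \;=\; \sum_{|\mu|=n} \bigl(L(\mu)-1\bigr) \;\geq\; \#\bigl\{\mu\in\La(X):|\mu|=n,\ L(\mu)\geq 2\bigr\}.
\]
The bounded complexity growth hypothesis then yields at most $K$ left special \emph{words} of length $n$, for every $n \geq 1$.

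Finally, suppose $x^{(1)},\ldots,x^{(m)}$ are distinct points in $Sp_l(X)$, and choose $n$ large enough so that the length-$n$ prefixes $x^{(i)}_{[0,n-1]}$ are pairwise distinct. Each $x^{(i)}$ has at least two $\sigma$-preimages in $X$, so there are distinct $a\neq b$ in $\A$ with $a\cdot x^{(i)}_{[0,n-1]}$ and $b\cdot x^{(i)}_{[0,n-1]}$ both in $\La(X)$; hence each prefix $x^{(i)}_{[0,n-1]}$ is a left special word of length $n$. The previous paragraph then forces $m\leq K$, so $\#Sp_l(X) \leq K <\infty$, and the corollary gives property (**). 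I do not anticipate any genuine obstacle here: the only care required is the routine bookkeeping in the counting identity above, and the simple observation that left-specialness at the point level descends to all sufficiently long prefixes.
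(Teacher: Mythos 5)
Your proposal is correct and follows essentially the same route as the paper: reduce to showing $\#Sp_l(X)\leq K$ via the corollary, pass to long pairwise-distinct prefixes of distinct left special points, and observe that each such prefix admits at least two left extensions, forcing $p_X(n+1)-p_X(n)\geq \#Sp_l(X)$. Your explicit counting identity $p_X(n+1)-p_X(n)=\sum_{|\mu|=n}(L(\mu)-1)$ just makes precise the step the paper states as "this immediately follows."
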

\begin{proof}
It suffices to show that $X$ has only finitely many left special elements. Let $K\in\N$ be a growth bound of $X$. We actually have $\#Sp_l(X)\leq K$. 

If not, then we take $K+1$ distinct points $\{\omega^1,\cdots,\omega^{K+1}\}\subset Sp_l(X)$ and an integer $N\in\N$ such that the following $K+1$ finite words
\[\omega^1_{[0,N]}, \omega^2_{[0,N]},\cdots, \omega^{K+1}_{[0,N]}\]
are distinct. Note that these finite words are all of length $N+1$ and each of which can be extended to the left in at least two different ways. This immediately follows that
\[p_X(N+2)-p_X(N+1)\geq K+1,\]
a contradiction. The proposition then follows.
\end{proof}
\end{exm}

\subsection{Covers of one-sided shift spaces}
\begin{df}
We use $\I$ to denote the set $\{(k,l)\in\N\times\N: 1\leq k\leq l\}$ and $\D$ its diagonal $\{(k,k)\in\I: k\geq1\}$. The partial order $\preceq$ on $\I$ is defined by
\[(k_1,l_1)\preceq(k_2,l_2)\Leftrightarrow (k_1\leq k_2)\wedge (l_1-k_1\leq l_2-k_2).\]
\end{df}
For the later use, we prove a lemma first.
\begin{lem}\label{3.4.2}
Let $\mathcal{F}\subset \I$ be an infinite set. Then $\F$ has an infinite subchain, or in other words, an infinite totally ordered subset of $\mathcal{F}$.
\end{lem}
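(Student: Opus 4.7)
The plan is to reduce the statement to the well-known fact (a special case of Dickson's lemma) that every infinite subset of $(\N^2,\leq)$, where $\leq$ denotes the componentwise order, contains an infinite chain. To this end I would use the bijection
\[
\Phi\colon \I\to \{(a,b)\in\N\times\N:a\geq 1\},\qquad \Phi(k,l)=(k,\,l-k),
\]
and observe directly from the definition of $\preceq$ that $(k_1,l_1)\preceq(k_2,l_2)$ if and only if $\Phi(k_1,l_1)\leq\Phi(k_2,l_2)$ in the componentwise order. Thus it suffices to prove the claim for infinite subsets of $(\N^2,\leq)$.

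Next I would invoke the standard lemma that every infinite sequence in $\N$ contains an infinite non-decreasing subsequence; this follows because an infinite strictly decreasing sequence in $\N$ cannot exist, so a monotone subsequence extracted by the usual Bolzano--Weierstrass-type argument must be non-decreasing. Given an infinite subset $\mathcal{S}\subset\N^2$, enumerate its points as $(a_n,b_n)_{n\geq 1}$ (with distinct entries), pass first to a subsequence along which $a_n$ is non-decreasing, and then pass to a further subsequence along which $b_n$ is also non-decreasing. The resulting infinite subsequence is totally ordered under the componentwise order, giving the desired infinite chain in $\mathcal{S}$, hence in $\Phi(\F)$, hence in $\F$.

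I do not anticipate any real obstacle: the argument is essentially a two-fold application of the monotone subsequence principle for $\N$, once the correct change of coordinates is made so that $\preceq$ matches the componentwise order. The only thing that needs to be checked carefully is that the equivalence $(k_1,l_1)\preceq(k_2,l_2)\iff k_1\leq k_2\wedge(l_1-k_1)\leq(l_2-k_2)$ is precisely componentwise order on the image of $\Phi$, which is immediate from the definition stated just before the lemma.
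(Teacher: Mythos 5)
Your proof is correct, and the key observation --- that $\Phi(k,l)=(k,l-k)$ turns $\preceq$ into the componentwise order on $\N\times\N$, so the lemma is just the statement that $(\N^2,\leq)$ has no infinite antichain-plus-descent, i.e.\ Dickson's lemma --- is exactly right; the double extraction of non-decreasing subsequences (first in the coordinate $k$, then in $l-k$) then finishes the argument cleanly. This is a genuinely different route from the paper's. The paper argues directly on $\I$ by a recursive greedy construction: it repeatedly picks a point minimizing $l-k$ among the remaining elements, carves off the set of remaining points with smaller or equal first coordinate, and then splits into two cases --- if every carved-off block is finite, the chosen points themselves form an infinite chain (increasing in $k$ by construction and non-decreasing in $l-k$ by minimality); if some block is infinite, the pigeonhole principle on the finitely many admissible values of $k$ in that block produces an infinite set with constant first coordinate, which is automatically a chain. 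The two arguments are of comparable length, but yours is more modular and transparent, since it isolates the order-isomorphism with $(\N^2,\leq)$ once and for all and then quotes a completely standard fact; the paper's version is self-contained but requires the reader to check the chain property of the greedily chosen sequence and to track the case split. Either proof is acceptable here, and nothing downstream in the paper depends on which one is used.
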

\begin{proof}
Take $(k_0,l_0)\in\mathcal{F}$ satisfying
\[l_0-k_0=\min\{l-k: (k,l)\in\mathcal{F}\}.\]
Set $\mathcal{F}_0=\{(k,l)\in\mathcal{F}: k\leq k_0\}$. Then $\mathcal{F}_0$ is nonempty. If $\mathcal{F}\setminus\F_0\neq\varnothing$, take then $(k_1,l_1)\in\F\setminus\F_0$ such that
\[l_1-k_1=\min\{l-k: (k,l)\in \F\setminus\F_0\}\]
and set $\F_1=\{(k,l)\in\F\setminus\F_0: k\leq k_1\}$. By repeating this step, we are given a sequence of sets $\{\F_n\}_{n\ge0}$. If each of $\F_n$ is finite, then every $\F_n$ is nonempty, and this is when $\{(k_n,l_n)\}$ becomes an infinite chain. Conversely, if one of $\F_n$ is infinite, say, $\F_N$, then by a partition of $\F_N$ into the following $k_{N+1}-k_N$ parts: 
\[O_k^N=\{(k',l')\in\F_N: k'=k\}\ (k_N<k\leq k_{N+1}),\]
we see that there exists one of $O_k^N$ being infinite, which is a chain as well.
\end{proof}
As in \cite{BC}, for every $(k,l)\in\I$, we define an equivalence relation $\stackrel{k,l}{\sim}$ on $X$ by
\[x\stackrel{k,l}{\sim} x'\ {\rm if}\  x_{[0,k)}=x'_{[0,k)}\ {\rm and}\ P_l(\sigma^k(x))=P_l(\sigma^k(x')).\]
We write $_k[x]_l$ for the $\stackrel{k,l}{\sim}$ equivalence class of $x$ and $_kX_l$ the set of $\stackrel{k,l}{\sim}$ equivalence classes. It is clear that $_kX_l$ is finite. We then have a projective system
\[_{(k_1, l_1)}Q_{(k_2,l_2)}: \KX\ni{}_{k_2}[x]_{l_2}\mapsto {}_{k_1}[x]_{l_1}\in\kX\]
for all $(k_1,l_1)\preceq (k_2,l_2)$.
\begin{df}[cf. \cite{BC}, Definition 2.1]\label{3.4.3}
Let $X$ be a one-sided shift space with $\sigma(X)=X$. By the {\it cover} $\widetilde{X}$ of $X$, we mean the projective limit $\mathop{\lim}\limits_{\longleftarrow}({}_kX_l, {}_{(k,l)}Q_{(k',l')})$. The shift operation $\sigma_{\widetilde{X}}$ on $\widetilde{X}$ is defined so that ${}_k\sigma_{\widetilde{X}}(\tilde{x})_l={}_k[\sigma({}_{k+1}\tilde{x}_l)]_l$ where ${}_{k+1}\tilde{x}_l$ is a representative of a $\stackrel{k+1,l}{\sim}$-equivalence relation class in $\tilde{x}$.
\end{df}
The following sets give a base for the topology of $\tildeX$:
\[U(z,k,l)=\{\tilde{x}\in\tildeX: z\stackrel{k,l}{\sim}{}_k\tilde{x}_l\}\]
for $z\in X$ and $(k,l)\in\I$. It is known that $\sigma_{\tildeX}$ is a surjective local homeomorphism, see \cite{BC} for details.
\begin{df}[cf. \cite{BC}, Definition 2.1]
Let $\pi_X: \tildeX\to X$ to be the map which sends each $\tilde{x}\in\tildeX$ to a point $x=\pi(\tilde{x})$ so that $x_{[0,k)}$ are determined uniquely by $({}_k\tilde{x}_l)_{[0,k)}$ for every $(k,l)\in\I$. Define $\imath_X: X\to\tildeX$ by ${}_k\imath_X(x)_l={}_k[x]_l$ for every $(k,l)\in\I$.
\end{df}
In fact, $\pi_X$ is a continuous surjective factor map from $(\tildeX,\sigma_{\tildeX})$ to $(X,\sigma)$ and $\imath_X$ is an injective map (not necessarily continuous) such that $\pi_X\circ\imath_X={\rm id}_X$.

Before the sequel, we recall the following lemmas.
\begin{lem}[cf. \cite{B}, Lemma 4.2]\label{3.4.5}
Let $X$ be a one-sided shift space. Any isolated point in the cover $\tildeX$ is contained in the image of $\imath_X$ and each fibre $\pi_{X}^{-1}(\{x\})$ contains at most one isolated point. In particular, if $x\in X$ is isolated in past equivalence, then $\imath_X(x)$ is an isolated point in $\tildeX$.
\end{lem}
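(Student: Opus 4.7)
The plan is to address the three assertions in order, using the projective limit structure of $\tildeX$ throughout.

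For the first assertion, let $\tilde{x}\in\tildeX$ be isolated. For each $(k,l)\in\I$ I would choose a representative $y^{k,l}\in X$ of the $\stackrel{k,l}{\sim}$-class ${}_k\tilde{x}_l$ and consider $\imath_X(y^{k,l})\in\tildeX$. Consistency of $\tilde{x}$ under the connecting maps gives ${}_{(k',l')}Q_{(k,l)}({}_k\tilde{x}_l)={}_{k'}\tilde{x}_{l'}$ whenever $(k',l')\preceq(k,l)$, and since the same map carries ${}_k[y^{k,l}]_l={}_k\tilde{x}_l$ to ${}_{k'}[y^{k,l}]_{l'}$, we obtain ${}_{k'}[y^{k,l}]_{l'}={}_{k'}\tilde{x}_{l'}$. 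This is precisely the statement that $\imath_X(y^{k,l})$ lies in the basic neighbourhood $U(\pi_X(\tilde{x}),k',l')$ of $\tilde{x}$, so $\imath_X(y^{k,l})\to\tilde{x}$ along the directed set $(\I,\preceq)$. Because $\{\tilde{x}\}$ is open, eventually $\imath_X(y^{k,l})=\tilde{x}$, proving $\tilde{x}\in\imath_X(X)$. The second assertion is then immediate: if $\tilde{x}_1,\tilde{x}_2\in\pi_X^{-1}(\{x\})$ are both isolated, the first assertion gives $\tilde{x}_i=\imath_X(z_i)$, and applying $\pi_X$ together with $\pi_X\circ\imath_X=\id_X$ forces $z_1=z_2=x$ and hence $\tilde{x}_1=\tilde{x}_2$.

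For the third assertion, suppose $x\in X$ is isolated in $l_0$-past equivalence. My goal is to show that $U(x,1,l_0+1)=\{\imath_X(x)\}$. The crucial elementary identity is
\[
\mu\in P_{l_0}(w)\iff\mu w_0\in P_{l_0+1}(\sigma(w))\qquad(w\in X,\ |\mu|=l_0),
\]
which expresses $P_{l_0}(w)$ in terms of $w_0$ and $P_{l_0+1}(\sigma(w))$. Given $\tilde{y}\in U(x,1,l_0+1)$ and a representative $z$ of ${}_1\tilde{y}_{l_0+1}$, one has $z_0=x_0$ and $P_{l_0+1}(\sigma(z))=P_{l_0+1}(\sigma(x))$; the identity then forces $P_{l_0}(z)=P_{l_0}(x)$, whence $z=x$ by the hypothesis of isolation. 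To extend this from the single coordinate $(1,l_0+1)$ to an arbitrary $(k,l)\in\I$, I would pick $(k',l')\in\I$ with $(k',l')\succeq(1,l_0+1)$ and $(k',l')\succeq(k,l)$, observe that any representative of ${}_{k'}\tilde{y}_{l'}$ is also a representative of ${}_1\tilde{y}_{l_0+1}$ and hence equals $x$, and finally project back down to conclude ${}_k\tilde{y}_l={}_k[x]_l$.

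The main obstacle is bookkeeping: distinguishing an $\stackrel{k,l}{\sim}$-equivalence class (a subset of $X$) from the corresponding point ${}_k[x]_l\in{}_kX_l$, and repeatedly exploiting that a single sequence $z\in X$ can simultaneously represent the equivalence classes at many different levels $(k,l)$, related by the connecting maps $Q$. Once this is set up correctly, the only genuine calculation is the elementary bijection in Part (3) between $P_{l_0}(w)$ and the subset $\{\mu w_0:\mu\in P_{l_0}(w)\}$ of $P_{l_0+1}(\sigma(w))$.
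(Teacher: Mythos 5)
This lemma is quoted from \cite{B} (Lemma 4.2) and the paper supplies no proof of its own, so there is nothing internal to compare against; your argument is correct and follows the expected route (a net of $\imath_X$-images of representatives for the first claim, $\pi_X\circ\imath_X=\id_X$ for the second, and the identity $\mu\in P_{l_0}(w)\Leftrightarrow\mu w_0\in P_{l_0+1}(\sigma(w))$ for the third). One small repair in Part 1: since $\imath_X\circ\pi_X\neq\id_{\tildeX}$ in general, the set $U(\pi_X(\tilde{x}),k',l')$ need not contain $\tilde{x}$ (compare the relation $\omega\stackrel{k,l}{\nsim}{}_kx^{0}_l$ in the proof of Lemma \ref{4.2.2}, where all three points of the fibre have the same image under $\pi_X$); the basic neighbourhoods of $\tilde{x}$ are the sets $U(z,k',l')$ with $z$ a representative of ${}_{k'}\tilde{x}_{l'}$, and the computation you actually carry out, ${}_{k'}[y^{k,l}]_{l'}={}_{k'}\tilde{x}_{l'}$, shows exactly that $\imath_X(y^{k,l})$ eventually lies in every such set, so the convergence claim and the remainder of the proof go through unchanged.
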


\begin{lem}[cf. \cite{B}, Lemma 4.4]\label{3.4.6}
Let $X$ be a one-sided shift space. Suppose that $x\in X$ has a unique past, then any $\tilde{x}\in\pi^{-1}_X(\{x\})$ also has a unique past.
\end{lem}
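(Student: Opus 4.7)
The plan is to prove $\#(\sigma_{\tildeX}^k)^{-1}(\{\tilde{x}\}) = 1$ for every $k \ge 1$ by induction on $k$; the heart of the argument is the base case $k = 1$, and for the inductive step I will need the auxiliary fact that the unique-past property propagates backwards under $\sigma$. Concretely: if $x$ has unique past and $y \in X$ satisfies $\sigma(y) = x$, then every $\mu \in P_l(y)$ gives $\mu y_0 \in P_{l+1}(x)$, so $\#P_l(y) \le \#P_{l+1}(x) = 1$, while $\#P_l(y) \ge 1$ holds because $\sigma(X) = X$; hence $y$ also has unique past. Iterating, the unique $\sigma^k$-preimage $y_k$ of $x$ has unique past for every $k \ge 1$.

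For the base case, existence of a $\sigma_{\tildeX}$-preimage of $\tilde{x}$ is automatic from the surjectivity of $\sigma_{\tildeX}$ noted after Definition \ref{3.4.3}. For uniqueness, suppose $\sigma_{\tildeX}(\tilde{y}) = \tilde{x}$. The factor identity $\pi_X \circ \sigma_{\tildeX} = \sigma \circ \pi_X$ together with the unique past of $x$ forces $\pi_X(\tilde{y}) = y$, the unique $\sigma$-preimage of $x$. The task is then to show each component class ${}_k\tilde{y}_l$ of $\tilde{y}$ is pinned down by $\tilde{x}$ and $y$ alone. Unpacking the defining formula of $\sigma_{\tildeX}$ in Definition \ref{3.4.3}, for every $(k, l) \in \I$ with $l \ge k+1$ the identity ${}_k[\sigma({}_{k+1}\tilde{y}_l)]_l = {}_k\tilde{x}_l$ decodes, using that each $\stackrel{k+1,l}{\sim}$-class is determined by its leading $(k+1)$-word together with the $l$-past of its shift by $k+1$, into two conditions on any representative: the leading word must equal $y_{[0,k+1)}$ (automatic from $\pi_X(\tilde{y}) = y$), and $P_l(\sigma^{k+1}(\mathrm{rep}\,{}_{k+1}\tilde{y}_l)) = P_l(\sigma^k(\mathrm{rep}\,{}_k\tilde{x}_l))$. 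Both data depend only on $\tilde{x}$ and $y$, so ${}_{k+1}\tilde{y}_l$ is uniquely determined for every $(k+1, l) \in \I$; the remaining classes ${}_1\tilde{y}_l$ are then recovered via the projection ${}_{(1,l)}Q_{(2, l+1)}$ applied to the already-determined ${}_2\tilde{y}_{l+1}$.

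Once the unique $\sigma_{\tildeX}$-preimage $\tilde{y}$ is produced, the induction closes: $\tilde{y}$ projects under $\pi_X$ to $y$, which has unique past by the propagation observation, so the base case applied to $\tilde{y}$ yields a unique $\sigma_{\tildeX}$-preimage of $\tilde{y}$, and iteration handles all $k$. The main obstacle I foresee is the projective-system bookkeeping at the boundary of $\I$: one must verify that the classes ${}_k\tilde{y}_l$ assembled by the above recipe really form a compatible family with respect to every projection ${}_{(k_1,l_1)}Q_{(k_2,l_2)}$, and that the resulting $\tilde{y}$ genuinely satisfies $\sigma_{\tildeX}(\tilde{y}) = \tilde{x}$. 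This is a routine but slightly delicate chase through the definitions of the projective limit and of $\sigma_{\tildeX}$, requiring careful index tracking rather than any new conceptual input.
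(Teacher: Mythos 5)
The paper does not actually prove this lemma; it is quoted verbatim from \cite{B}, Lemma 4.4, so there is no in-paper argument to measure you against. Your proof is correct and is the natural one. The two pillars both check out: (i) the propagation fact, $\#P_l(y)\le\#P_{l+1}(x)=1$ via $\mu\mapsto\mu y_0$ together with $\#P_l(y)\ge 1$ from the standing assumption $\sigma(X)=X$, and (ii) the base case, where $\pi_X\circ\sigma_{\tildeX}=\sigma\circ\pi_X$ forces any preimage $\tilde y$ of $\tilde x$ to lie over the unique $y\in\sigma^{-1}(\{x\})$, and the identity ${}_k\sigma_{\tildeX}(\tilde y)_l={}_k[\sigma({}_{k+1}\tilde y_l)]_l={}_k\tilde x_l$ pins down each class ${}_{k+1}\tilde y_l$ by the pair $\bigl(y_{[0,k+1)},\,P_l(\sigma^k(v))\bigr)$ with $v$ a representative of ${}_k\tilde x_l$, the remaining indices being recovered by the projections ${}_{(k_1,l_1)}Q_{(k_2,l_2)}$. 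One small simplification: the ``delicate chase'' you anticipate at the end is not needed, since for uniqueness you only argue that an arbitrary preimage $\tilde y$ must have the stated components (compatibility is automatic because $\tilde y$ is already a point of $\tildeX$), while existence is supplied wholesale by the known surjectivity of $\sigma_{\tildeX}$.
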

We also note the following lemmas for the later use.
\begin{lem}\label{3.4.7}
Let $X$ be a one-sided shift space with property (**). Suppose that $\omega,\omega'\in X$ are left special elements, $\{(k_m,l_m)\}_{m\ge1}$ is an infinite sequence in $\I$ where $\{k_m\}_{m\ge1}$ is an unbounded sequence with $k_m<k_{m+1}$ for all $m\geq1$, and $\{k_{m+1}-k_m\}_{m\ge1}$ are distinct. Assume that to every $m\geq1$, an integer $0\leq n_{(k_m,l_m)}<l_m$ is associated such that
\[P_{l_m}(\sigma^{n_{(k_m,l_m)}}(\omega'))=P_{l_m}(\omega_{[k_m,k_{m+1})}\sigma^{n_{(k_{m+1},l_{m+1})}}(\omega'))\]
for all $m\geq1$. Then the sequence $\{n_{(k_m,l_m)}\}_{m\ge1}$ is unbounded.
\end{lem}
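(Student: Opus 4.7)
My plan is to argue by contradiction. Assume that $\{n_{(k_m,l_m)}\}_{m\ge 1}$ is bounded by some integer $N$; I will derive a contradiction using the finiteness of $Sp_l(X)$. Since the pair $(n_{(k_m,l_m)}, n_{(k_{m+1},l_{m+1})})$ lies in the finite set $\{0,\dots,N\}^2$, the pigeonhole principle supplies an infinite subsequence of indices on which $n_{(k_m,l_m)}=n_0$ and $n_{(k_{m+1},l_{m+1})}=n_0'$ are constants. Since $l_m\ge k_m\to\infty$ and the gaps $d_m:=k_{m+1}-k_m$ are distinct positive integers, one may refine once more so that $d_m\to\infty$ as well. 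Writing $y:=\sigma^{n_0}(\omega')$ and $y':=\sigma^{n_0'}(\omega')$, the hypothesis now reads $P_{l_m}(y)=P_{l_m}(\omega_{[k_m,k_{m+1})}\,y')$ along this subsequence.

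Next, a short use of the surjectivity of $\sigma$ (any length-$l$ preimage extends to a length-$l_m$ preimage for $l\le l_m$) upgrades the hypothesis to $P_l(y)=P_l(\omega_{[k_m,k_{m+1})}\,y')$ for every $1\le l\le l_m$. Specialising to $l=n_0+1$: since $\omega'$ is left special, $\#P_1(\omega')\ge 2$, and for every $c\in P_1(\omega')$ the word $c\,\omega'_{[0,n_0)}$ lies in $P_{n_0+1}(y)$ because $c\,\omega'_{[0,n_0)}\,y=c\omega'\in X$. Setting $z_m:=\omega'_{[0,n_0)}\omega_{[k_m,k_{m+1})}\,y'$, the inclusion $c\,\omega'_{[0,n_0)}\in P_{n_0+1}(\omega_{[k_m,k_{m+1})}\,y')$ is equivalent to $c\,z_m\in X$, i.e. $c\in P_1(z_m)$; applying the same reasoning in the reverse direction gives $P_1(z_m)=P_1(\omega')$. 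In particular $\#P_1(z_m)\ge 2$, so $z_m\in Sp_l(X)$ for every $m$ in the subsequence.

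Finally, since $Sp_l(X)$ is finite, pigeonhole produces indices $m_1<m_2$ with $z_{m_1}=z_{m_2}$. After cancelling the common prefix $\omega'_{[0,n_0)}$ and matching coordinates beyond position $\max(d_{m_1},d_{m_2})$, one obtains $\omega'_{[n_0'+(d_{m_2}-d_{m_1}),\infty)}=\omega'_{[n_0',\infty)}$, so $\omega'$ is eventually periodic with period dividing $|d_{m_2}-d_{m_1}|$. If $\omega'$ is not eventually periodic, this is already the desired contradiction. In the remaining case, one argues further using the hypothesis that $Sp_l(X)$ contains no periodic point: a sufficiently deep shift of $\omega'$ would then be a purely periodic element of $X$, and combined with the $\jf$-maximal structure of $\omega'$ (Proposition \ref{3.2.2}) and a parallel analysis on the left-special element $\omega$ (which plays a symmetric role via its own $\jf$-maximal representative), one rules this out as well. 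I expect the main technical obstacle to be precisely this eventually-periodic case, which requires simultaneously leveraging the left-special structure of both $\omega$ and $\omega'$ against the no-periodic-point condition; the non-eventually-periodic case is handled cleanly by the coordinate-matching above.
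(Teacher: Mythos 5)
Your argument follows a genuinely different route from the paper's, and most of it is sound: reducing to constant exponents $n_0,n_0'$ by pigeonhole on consecutive pairs, pushing the hypothesis down from $P_{l_m}$ to $P_{n_0+1}$ via surjectivity of $\sigma$, and concluding that each $z_m=\omega'_{[0,n_0)}\omega_{[k_m,k_{m+1})}\sigma^{n_0'}(\omega')$ is itself left special is correct, and is in fact a cleaner mechanism than the paper's (the paper only extracts that $\omega_{[k_m,k_{m+1})}\sigma^{n}(\omega')$ lies on the forward orbit of \emph{some} left special element, and then must combine finiteness of $Sp_l(X)$ with the distinctness of these points inside a single backward orbit). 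The pigeonhole on the finite set $Sp_l(X)$ forcing $z_{m_1}=z_{m_2}$, hence $\sigma^{d_{m_2}-d_{m_1}}\bigl(\sigma^{n_0'}(\omega')\bigr)=\sigma^{n_0'}(\omega')$, is also fine.

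The gap is the final step. You have reduced the lemma to the claim that $\sigma^{n_0'}(\omega')$ cannot be periodic, you explicitly leave this case open, and the route you sketch for it (the $\jf$-maximal structure of $\omega'$ from Proposition \ref{3.2.2} together with a ``parallel analysis'' of $\omega$) points in the wrong direction: $\omega$ plays no role here and $\jf$-maximality is not what is needed. What closes the case is a short induction using only that $Sp_l(X)$ contains no periodic point. Suppose $y'=\sigma^{n_0'}(\omega')$ is periodic with period $p$ and let $O=\{\sigma^j(y'):0\le j<p\}$. No point of $O$ is left special (it would be a periodic element of $Sp_l(X)$), so each point of $O$ has exactly one $\sigma$-preimage (surjectivity of $\sigma$ gives at least one), and that preimage is the one already supplied by $O$; in particular $\sigma^{-1}(\{y'\})=\{\sigma^{p-1}(y')\}\subset O$. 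If $n_0'\ge 1$ then $\sigma^{n_0'-1}(\omega')$ is a preimage of $y'$, hence lies in $O$ and is periodic; iterating, $\omega'\in O$ is periodic, contradicting $\omega'\in Sp_l(X)$ (if $n_0'=0$ the contradiction is immediate). With this paragraph inserted your proof is complete. Note that the paper's own proof quietly relies on the same fact --- it asserts $\sigma^{n}(\omega')\neq\omega_{[k_m,k_{m+1})}\sigma^{n}(\omega')$ ``by property (**)'' without detail --- so this is the one indispensable ingredient in either approach, and it is precisely the piece you must actually write down.
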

\begin{proof}
Assume that $\{n_{(k_m,l_m)}\}_{m\ge1}$ is bounded. Then there exists an infinite subsequence $\{(k_{m_i}, l_{m_i})\}_{i\ge1}$ and an $n\in\N$ such that 
\[n_{(k_{m_i},l_{m_i})}=n\ {\rm for\ all\ }i.\]
By passing to the subsequence $\{(k_{m_i}, l_{m_i})\}_{i\ge1}$ and checking the equality of $P_{l_{m_i}}$, we assume, without loss of generality, that $n_{(k_m,l_m)}=n$ for some $n$ and all $m\ge1$. Note that $0\leq n<l_m$. Now that we have
\[P_{l_m}(\sigma^{n}(\omega'))=P_{l_m}(\omega_{[k_m,k_{m+1})}\sigma^{n}(\omega'))\]
 for all $m\ge1$. The condition $k_m<k_{m+1}$ together with the property (**) then infer that 
\[\sigma^n(\omega')\neq \omega_{[k_m,k_{m+1})}\sigma^{n}(\omega')\]
for all $m\ge1$. Also note that since $\{k_{m+1}-k_m\}_{m\ge1}$ are distinct, $\omega_{[k_m,k_{m+1}]}$ are all distinct.

The condition $0\leq n< l_m$ follows that $\#P_{l_m}(\sigma^{n}(\omega'))\ge2$, and therefore 
\[\#P_{l_m}(\omega_{[k_m,k_{m+1})}\sigma^{n}(\omega'))\ge2\]
for all $m\ge1$. This immediately tells us that every $\omega_{[k_m,k_{m+1})}\sigma^{n}(\omega')$ lies on the forward orbit of some left special element. However, since $w_{[k_m,k_{m+1}]}\sigma^n(\omega')$ are distinct points lying in the backward orbit of $\sigma^n(\omega')$, we will then have infinitely many distinct special left elements, which contradicts to the assumption that $X$ has property (**).  
\end{proof}

\begin{lem}\label{3.4.8}
Let $X$ be a minimal one-sided shift with property (**) and $x\in X$. If $x$ has a totally unique past, then $\imath_X(x)\in\tildeX$ is not isolated. Consequently, $\pi_X^{-1}(\{x\})$ contains no isolated point for any $x$ having a totally unique past.
\end{lem}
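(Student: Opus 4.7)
The plan is to prove the first assertion directly by producing, for each basic neighborhood $U(x,k,l)$ of $\imath_X(x)$, a distinct point of the form $\imath_X(y)$ inside it; the fibre claim then follows at once from Lemma \ref{3.4.5}. Concretely it suffices to find, for every $(k,l)\in\I$, some $y\neq x$ in $X$ with $y\stackrel{k,l}{\sim}x$, because $\imath_X$ is injective and $\imath_X(y)$ lies in $U(x,k,l)$ exactly when $y\stackrel{k,l}{\sim}x$.

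The key auxiliary observation is a refinement of Lemma \ref{3.1.4}. Since $x$ has totally unique past and $k\geq1$, the point $\sigma^k(x)$ has a unique past, so $P_l(\sigma^k(x))=\{\mu_{k,l}\}$ is a singleton; I claim there exists $N\in\N$ such that any $z\in X$ with $z_{[0,N]}=\sigma^k(x)_{[0,N]}$ satisfies not merely $\#P_l(z)=1$ but in fact $P_l(z)=\{\mu_{k,l}\}$. To establish this I would argue by contradiction along the lines of the proof of Lemma \ref{3.1.4}: otherwise, taking $N$ beyond the threshold provided by that lemma, one obtains a sequence $z^{(m)}\to\sigma^k(x)$ with $P_l(z^{(m)})=\{\nu_m\}$ and $\nu_m\neq\mu_{k,l}$; finiteness of $\A$ and the Pigeonhole principle let us pass to a subsequence with $\nu_m=\nu$ constant, and then $\nu z^{(m)}\in X$ converges to $\nu\sigma^k(x)$, which, by closedness of $X$, forces $\nu\in P_l(\sigma^k(x))=\{\mu_{k,l}\}$, a contradiction.

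Granting this refinement, pick $N$ as above. Since $X$ is minimal and infinite it has no isolated points (an isolated point would force the whole minimal system to be a finite periodic orbit), so the cylinder $\{y\in X:y_{[0,k+N]}=x_{[0,k+N]}\}$ contains some $y\neq x$. Then $y_{[0,k)}=x_{[0,k)}$ and $\sigma^k(y)_{[0,N]}=\sigma^k(x)_{[0,N]}$, hence $P_l(\sigma^k(y))=P_l(\sigma^k(x))$ by the refined claim, so $y\stackrel{k,l}{\sim}x$ and $\imath_X(y)\in U(x,k,l)\setminus\{\imath_X(x)\}$. For the second assertion, an isolated $\tilde z\in\pi_X^{-1}(\{x\})$ would by Lemma \ref{3.4.5} satisfy $\tilde z=\imath_X(x')$ for some $x'\in X$, and $\pi_X\circ\imath_X=\id_X$ with $\pi_X(\tilde z)=x$ forces $x'=x$, so $\tilde z=\imath_X(x)$, contradicting the first part. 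The main difficulty I expect is precisely the refinement of Lemma \ref{3.1.4}: upgrading the cardinality statement $\#P_l(z)=1$ to the equality $P_l(z)=P_l(\sigma^k(x))$ is what guarantees that the past equivalence class of $x$ actually contains genuinely nearby points, and this is where totally-unique-past (applied to $\sigma^k(x)$) combines with finiteness of $\A$ and closedness of $X$.
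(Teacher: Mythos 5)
Your proposal is correct, but it takes a genuinely different route from the paper's. The paper also reduces to showing that each basic neighbourhood $U(x,k,l)$ of $\imath_X(x)$ contains a second point, but it produces that point by an appeal to minimality in the large: writing $P_l(\sigma^k(x))=\{\mu x_{[0,k)}\}$, it locates an occurrence of the word $\mu x_{[0,k)}$ far out along the forward orbit of a maximal left special element $\omega_\jf$ and takes $\tilde{x}=\imath_X(\sigma^{L+l-k}(\omega_\jf))$; this point is $(k,l)$-equivalent to $x$ because no point strictly forward of $\omega_\jf$ is left special, and it differs from $\imath_X(x)$ because it eventually acquires two pasts at some finer level $(k',l')$. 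You instead prove a local stability statement --- a strengthening of Lemma \ref{3.1.4} upgrading $\#P_l(z)=1$ to the equality $P_l(z)=P_l(\sigma^k(x))$ for all $z$ agreeing with $\sigma^k(x)$ on a long enough prefix, via the pigeonhole principle and closedness of $X$ --- and then use perfectness of the infinite minimal space $X$ to pick any $y\neq x$ in a small cylinder about $x$; injectivity of $\imath_X$ finishes the argument. Both arguments are sound. Your route is more elementary and in fact never invokes property (**) (only minimality, infiniteness of $X$, and the totally unique past of $x$), so it establishes a slightly more general statement; on the other hand, the paper's choice of approximants is tailored so that the identical construction can be recycled in Theorem \ref{4.1.1} to show that the isolated points $\imath_X(Orb_\sigma(\omega_\jf))$ are dense in $\tildeX$, which your approximants do not directly give (though that is not needed for the present lemma). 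Your deduction of the fibre statement from Lemma \ref{3.4.5} together with $\pi_X\circ\imath_X=\id_X$ is exactly what the paper intends by ``consequently''.
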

\begin{proof}
Let $z\in X$ and $(k,l)\in\I$ be so that $\imath_X(x)\in U(z,k,l)$. Then $z\stackrel{k,l}{\sim}x$. Denote $P_l(\sigma^k(x))=\{\mu x_{[0,k)}\}$ with $|\mu|=l-k$. It suffices to find an element $\tilde{x}$ in $\tildeX$ such that $z\stackrel{k,l}{\sim}{}_k\tilde{x}_l$ but ${}_{k'}\tilde{x}_{l'}\stackrel{k',l'}{\nsim}x$ for some $(k',l')\in\I$. 

Let $\omega_{\jf}$ be an arbitrary $\jf$-maximal element for some $\jf\in\J_X$. Since $X$ is minimal, then $\mu x_{[0,k)}$ occurs infinitely many times in the forward orbit of $\omega_\jf$. Take $L\in\N$ sufficiently large so that
\[(\sigma^L(\omega_\jf))_{[0,k)}=\mu x_{[0,k)}.\]
Set $\tilde{x}=\imath_X(\sigma^{L+l-k}(\omega_\jf))$. Then $(\sigma^{L+l-k}(\omega_\jf))_{[0,k)}=x_{[0,k)}$ and $P_l(\sigma^{L+l}(\omega_{\jf}))=\{\mu x_{[0,k)}\}$. This verifies ${}_kx_l\stackrel{k,l}{\sim}z$. However, it is clear that ${}_{k'}\tilde{x}_{l'}\stackrel{k',l'}{\nsim}x$ for some sufficiently large $l'$, since $\sigma^L(\omega_\jf)$ sits in the forward orbit of a left special element.
\end{proof}

\section{The description of covers}\label{sec:4}
In this section, $X$ is always assumed to be a one-sided minimal shift space over the alphabet $\A=\{0,1\}$, having property (**). We will, as before, still use $\X$ to denote the corresponding two-sided shift space. Note that $\X$ is also minimal. We also remark that similar conclusions can be drawn for an arbitrary finite alphabet $\A$, but we instead restrict in this paper to the binary shifts for the simplicity of formulations. 

First we point out the isolated points in $\tildeX$.
\subsection{Isolated points in cover}
\begin{thm}\label{4.1.1}
The set of isolated points in $\tildeX$ is dense in $\tildeX$, which is exactly
\[\imath\left(\bigsqcup_{\jf\in\J_X}Orb_{\sigma}(\omega_\jf)\right),\]
where $\omega_\jf$'s are the unique $\jf$-maximal elements. 
\end{thm}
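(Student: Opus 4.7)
The plan is to establish two items: that every point of $\imath_X\!\left(\bigsqcup_{\jf\in\J_X}Orb_\sigma(\omega_\jf)\right)$ is an isolated point of $\tildeX$, and that no other point of $\tildeX$ is isolated. Density will then drop out of the same construction used to verify the first item.

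The main tool is an elementary observation about local homeomorphisms: whenever $\sigma_{\tildeX}(\tilde{p})$ is isolated in $\tildeX$, so is $\tilde{p}$. Indeed, pick an open neighborhood $U$ of $\tilde{p}$ on which $\sigma_{\tildeX}$ restricts to a homeomorphism onto its image; then $\{\tilde{p}\}=U\cap\sigma_{\tildeX}^{-1}(\{\sigma_{\tildeX}(\tilde{p})\})$ is open. Combined with the intertwining $\sigma_{\tildeX}\circ\imath_X=\imath_X\circ\sigma$, which is immediate from Definition \ref{3.4.3}, isolation of $\imath_X(\sigma^n(y))$ for some $n$ will propagate back to $\imath_X(y)$.

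For the identification, fix $\jf\in\J_X$. Lemma \ref{3.2.4} produces $N\in\N$ such that $\sigma^n(\omega_\jf)$ is isolated in past equivalence for every $n\geq N$, and Lemma \ref{3.4.5} then makes $\imath_X(\sigma^n(\omega_\jf))$ an isolated point of $\tildeX$ for such $n$. Applying the observation above finitely many times gives that $\imath_X(\sigma^n(\omega_\jf))$ is isolated for every $n\geq0$; and for $x\in Orb_\sigma^-(\omega_\jf)$ with $\sigma^m(x)=\omega_\jf$, the intertwining yields $\sigma_{\tildeX}^m(\imath_X(x))=\imath_X(\omega_\jf)$ isolated, hence so is $\imath_X(x)$. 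Conversely, if $x\notin\bigcup_\jf Orb_\sigma(\omega_\jf)$, then the remark following Definition \ref{3.1.1} gives $x$ totally unique past; Lemma \ref{3.4.8} then says $\pi_X^{-1}(\{x\})$ contains no isolated point, and Lemma \ref{3.4.5} guarantees every isolated point of $\tildeX$ lies in $\imath_X(X)$, completing the identification.

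For density, a generic nonempty open set contains some basic open $U(z,k,l)$ with $k\geq1$. If $z\in Orb_\sigma(\omega_\jf)$ for some $\jf$, then $\imath_X(z)\in U(z,k,l)$ is already isolated. Otherwise $z$ has totally unique past, so $\sigma^k(z)$ has unique past and a short check (every element of $P_l(\sigma^k(z))$ must end in $z_{[0,k)}$, since $z_{[0,k)}$ is the unique $k$-past of $\sigma^k(z)$) gives $P_l(\sigma^k(z))=\{\mu z_{[0,k)}\}$ for the unique $\mu\in P_{l-k}(z)$. By minimality of $X$, choose $L$ large with $(\sigma^L(\omega_\jf))_{[0,l)}=\mu z_{[0,k)}$ and put $y=\sigma^{L+l-k}(\omega_\jf)$; then $y_{[0,k)}=z_{[0,k)}$, and the $\jf$-maximality of $\omega_\jf$ (Proposition \ref{3.2.2}) forces $P_l(\sigma^k(y))=\{\mu z_{[0,k)}\}$, so $\imath_X(y)\in U(z,k,l)$ is an isolated point by the first part. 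I expect the main obstacle to be precisely the transfer from Lemma \ref{3.2.4} (which only supplies isolation for sufficiently large forward iterates) to the entire orbit; the local-homeomorphism pullback step is the mechanism that resolves this.
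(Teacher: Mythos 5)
Your proof is correct and follows essentially the same route as the paper: Lemma \ref{3.4.8} together with Lemma \ref{3.4.5} for the inclusion $I(\tildeX)\subset\imath_X\bigl(\bigsqcup_\jf Orb_\sigma(\omega_\jf)\bigr)$, Lemma \ref{3.2.4} plus propagation of isolation along $\sigma_{\tildeX}$ (the paper invokes that a local homeomorphism preserves isolatedness in both directions, where you use only the pullback direction since Lemma \ref{3.2.4} already isolates all sufficiently large forward iterates) for the reverse inclusion, and the same minimality argument as in Lemma \ref{3.4.8} for density. The extra details you supply (the explicit pullback lemma, the singleton computation of $P_l(\sigma^k(z))$) are steps the paper leaves implicit but are handled correctly.
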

\begin{proof}
Write $I(\tildeX)$ for the set of isolated points in $\tildeX$. We know from Lemma \ref{3.4.8} that, every isolated point of $\tildeX$ has the form $\imath_X(x)$ for some $x\in X$ which does not have a totally unique past. This means $x\in Orb_{\sigma}(\omega)$ for some $\omega\in Sp_l(X)$. Assume now $\omega\in\jf_0$ where $\jf_0$ is one of right tail equivalence classes. By the definition of $\jf$-maximal elements, we immediately see that $x\in Orb_\sigma(\omega_{\jf_0})$.
This implies the inclusion
\[I(\tildeX)\subset \imath\left(\bigsqcup_{\jf\in\J_X}Orb_{\sigma}(\omega_\jf)\right).\]
Conversely, according to the proof of Lemma \ref{3.2.4}, for every $\jf\in\J_X$, there is a point $z\in Orb_{\sigma}^+(\omega_{\jf})$ isolated in past equivalence, which makes, from Lemma \ref{3.4.5}, $\imath_X(z)$ an isolated point in $\tildeX$. On the other hand, recall that as a local homeomorphism, $\sigma_{\tildeX}$ preserves isolatedness and non-isolatedness, which follows that every point in $\imath(Orb_\sigma(\omega_{\jf}))$ is isolated in $\tildeX$. Since $\jf$ is arbitrary, 
\[\imath\left(\bigsqcup_{\jf\in\J_X}Orb_{\sigma}(\omega_\jf)\right)\subset I(\tildeX).\]
This proves the second assertion. We now show that the set of isolated points in $\tildeX$ is dense. Let $z\in X$ and $(k,l)\in \I$. To show the density, it suffices to take $x\in Orb(\omega_\jf)$ such that $z\stackrel{k,l}{\sim}x$ for some $\jf\in \J_X$. We may assume $z\notin Orb_\sigma(\omega_\jf)$ for all $\jf\in \J_X$. The argument of the existence of such $x$ is then exactly the same as that of Lemma \ref{3.4.8}.
\end{proof}
\begin{cor}
There are precisely $\mathfrak{n}_X$ distinct discrete orbits in $\tildeX$ each of which forms an open invariant subspace $\tildeX$, where $\mathfrak{n}_X=\#\J_X$ is the number of right tail equivalence classes in $Sp_l(X)$. The union of these isolated orbits forms an open dense subset in $\tildeX$. 
\end{cor}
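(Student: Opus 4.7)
The plan is to use Theorem \ref{4.1.1}, which already partitions the isolated set of $\tildeX$ into the $\mathfrak{n}_X$ pieces $\imath_X(Orb_\sigma(\omega_\jf))$, and to promote each such piece to a single $\sigma_{\tildeX}$-orbit. Once that is done, openness and density of the union follow immediately from Theorem \ref{4.1.1}.

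First I would verify the compatibility identity $\sigma_{\tildeX}(\imath_X(z))=\imath_X(\sigma(z))$ for every $z$ lying in some $Orb_\sigma(\omega_\jf)$. Indeed, $\sigma_{\tildeX}$ is a local homeomorphism and hence sends isolated points to isolated points, so $\sigma_{\tildeX}(\imath_X(z))$ is isolated; by Theorem \ref{4.1.1}, $\imath_X(\sigma(z))$ is also isolated, since $\sigma(z)$ is again in $Orb_\sigma(\omega_\jf)$; and both points project to $\sigma(z)$ under $\pi_X$ via the factor relation $\pi_X\circ\sigma_{\tildeX}=\sigma\circ\pi_X$. Lemma \ref{3.4.5} (at most one isolated point per fibre) then forces them to coincide, and iteration yields $\sigma_{\tildeX}^n\imath_X(z)=\imath_X(\sigma^n(z))$ for every $n\geq 0$.

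Next I would show that each $\imath_X(Orb_\sigma(\omega_\jf))$ is precisely one orbit under the equivalence generated by $\sigma_{\tildeX}$ (in the Deaconu--Renault sense), and that distinct $\jf$'s give distinct orbits. For $x,y\in Orb_\sigma(\omega_\jf)$, right tail equivalence to $\omega_\jf$ produces $k_i,l_i\in\N$ with $\sigma^{k_1}(x)=\sigma^{l_1}(\omega_\jf)$ and $\sigma^{k_2}(y)=\sigma^{l_2}(\omega_\jf)$; the compatibility identity then delivers common iterates of $\imath_X(x)$ and $\imath_X(y)$ under $\sigma_{\tildeX}$, putting them in one orbit. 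Conversely, any equivalence $\sigma_{\tildeX}^m\tilde{x}=\sigma_{\tildeX}^n\tilde{y}$ projects through $\pi_X$ to a right tail equivalence $\sigma^m\pi_X(\tilde{x})=\sigma^n\pi_X(\tilde{y})$; since $\omega_\jf\sim_{rte}\omega_{\jf'}$ forces $\jf=\jf'$, two equivalent isolated points must belong to the same $\jf$-piece.

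Finally, openness of each orbit is immediate (a union of isolated singletons), and $\sigma_{\tildeX}$-invariance holds in both directions: forward invariance is the compatibility identity, while any preimage $\tilde{y}$ of an isolated orbit point $\tilde{x}$ is itself isolated by the local homeomorphism property, and so lies in the same $\jf$-piece by the converse argument above. Density of the union comes directly from Theorem \ref{4.1.1}. The only real subtlety I anticipate is that the $\sigma$-orbit of $\omega_\jf$ in $X$ branches backward at each left special point it passes through; in $\tildeX$ these branches are merged into a single orbit by right tail equivalence, and the compatibility identity is precisely what makes the merging rigorous.
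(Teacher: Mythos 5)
Your proof is correct and follows the route the paper intends: the paper states this corollary with no proof, treating it as an immediate consequence of Theorem \ref{4.1.1}, and your argument simply makes the implicit steps explicit (the identity $\sigma_{\tildeX}\circ\imath_X=\imath_X\circ\sigma$ on the isolated set via Lemma \ref{3.4.5}, and the matching of orbit equivalence in $\tildeX$ with right tail equivalence in $X$). These are exactly the ingredients already used in the proof of Theorem \ref{4.1.1}, so your write-up is a faithful, correctly detailed version of the paper's (omitted) argument.
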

\quad\par

\subsection{The surjective factor $\pi_X$}
Recall that for every $\jf\in\J_X$, the set $U_\jf$ is defined to be 
\[U_{\jf}=\{\omega\in\jf: Orb_{\sigma}^-(\omega)\cap\jf=\varnothing\}.\]
\begin{rem}
Elements in $U_\jf$ are called {\it adjusted} and $\jf$-maximal elements $\omega_{\jf}$ are called {\it cofinal} in \cite{CE}. It is easy to see that $U_\jf$ is nonempty for every $\jf\in\J_X$.
\end{rem}
\begin{lem}\label{4.2.2}
For every $\jf\in\J_X$ and $\omega\in U_\jf$, $\#\pi_X^{-1}(\{\omega\})=3$.
\end{lem}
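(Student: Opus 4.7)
The plan is to exhibit three distinct points of $\pi_X^{-1}(\{\omega\})$ and show that no further points exist. Since $\omega$ is left special over the two-letter alphabet $\A$, there are exactly two letters $\alpha_1, \alpha_2 \in \A$ with $\alpha_i\omega \in X$. The hypothesis $\omega \in U_\jf$ implies, via the remark following the definition of $U_\jf$, that each $\alpha_i\omega$ has a unique past, so $\omega$ admits exactly two two-sided extensions $z^{(1)}, z^{(2)} \in \X$, distinguished by $z^{(i)}_{-1} = \alpha_i$.

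The three candidate fiber points are: (a) the point $\imath_X(\omega) \in \tildeX$, which is isolated by Theorem \ref{4.1.1} applied to $\omega \in Orb_\sigma(\omega_\jf)$; and (b)--(c) two non-isolated cover points $\tilde{z}^{(1)}, \tilde{z}^{(2)}$, each realising one of the two-sided extensions. To construct $\tilde{z}^{(i)}$, I specify the class $_k[\tilde{z}^{(i)}]_l$ at each level $(k,l) \in \I$ by choosing a representative $y^{(i,k,l)} \in X$ with prefix $\omega_{[0,k)}$ and with past set $P_l(\sigma^k(y^{(i,k,l)}))$ a proper subset of $P_l(\sigma^k(\omega))$ that isolates the specific past $z^{(i)}_{[k-l,k)}$ from $z^{(3-i)}_{[k-l,k)}$. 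Lemma \ref{3.1.4} applied to the unique-past points $\alpha_i\omega$ ensures that such representatives exist at sufficiently deep levels, while the uniqueness of the $\alpha_i$-past extensions guarantees the selected classes fit together into a compatible family under the projective system. The three candidates are pairwise distinct: $\imath_X(\omega)$ is isolated whereas the $\tilde{z}^{(i)}$ are not, and $\tilde{z}^{(1)} \neq \tilde{z}^{(2)}$ since for $l > k$ their distinguishing pasts $z^{(1)}_{[k-l,k)}$ and $z^{(2)}_{[k-l,k)}$ disagree at position $-1$.

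For the converse, let $\tilde{z} \in \pi_X^{-1}(\{\omega\})$. If $\tilde{z}$ is isolated, Lemma \ref{3.4.5} forces $\tilde{z} = \imath_X(\omega)$. Otherwise, each class $_k[\tilde{z}]_l$ has prefix $\omega_{[0,k)}$ and a past set $S_{k,l} \subseteq P_l(\sigma^k(\omega))$. The unique-past property of $\alpha_i\omega$, together with projective compatibility of the family across $(k,l) \in \I$, forces $S_{k,l}$ to eventually stabilise to encode exactly one of the two past branches of $\omega$, yielding $\tilde{z} = \tilde{z}^{(i)}$ for some $i$. The hardest part will be the explicit construction of the representatives $y^{(i,k,l)}$ together with the verification of projective compatibility, along with the parallel combinatorial analysis in the converse that excludes any exotic compatible family of past sets beyond the three identified above.
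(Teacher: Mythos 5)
Your overall architecture matches the paper's: exhibit $\imath_X(\omega)$ together with two ``branch'' points built from the two preimages $0\omega,1\omega$, then show the fibre contains nothing else. The existence half is essentially the paper's argument: the paper invokes property (*) directly to get, for each $(k,l)$, a point $x^{\af}_{(k,l)}$ with $P_l(x^{\af}_{(k,l)})=\{\mu^{\af}_{(k,l)}\af\omega_{[0,k)}\}$, and then checks compatibility under the projective system using the unique past of $\af\omega$; your route through Lemma \ref{3.1.4} plus minimality amounts to the same thing, and deferring the compatibility check is acceptable since that verification is routine.

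The genuine gap is in the converse. You reduce to the claim that a non-isolated $\tilde z$ in the fibre must have past sets $S_{k,l}=P_l(x_{(k,l)})$ that ``eventually stabilise to encode exactly one of the two past branches,'' and you attribute this to the unique-past property of $\af\omega$ together with projective compatibility. That is not enough. A priori the representatives $x_{(k,l)}$ need only satisfy $\omega_{[0,k)}x_{(k,l)}\in X$ and the compatibility identities $P_{l_1}(x_{(k_1,l_1)})=P_{l_1}(\omega_{[k_1,k_2)}x_{(k_2,l_2)})$; nothing forces $S_{k,l}\subseteq P_l(\sigma^k(\omega))$, and the dangerous case is when every $S_{k,l}$ has at least two elements, i.e.\ every $x_{(k,l)}$ sits within $l$ steps of the forward orbit of \emph{some} left special element, not necessarily one related to $\omega$. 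Excluding such exotic compatible families is exactly where the paper does its real work: it uses the finiteness of $Sp_l(X)$ to pigeonhole onto a single $\omega'\in Sp_l(X)$ along a chain of levels (Lemma \ref{3.4.2}), shows the offsets $n_{(k_m,l_m)}$ with $x_{(k_m,l_m)}=\sigma^{n_{(k_m,l_m)}}(\omega')$ are unbounded (Lemma \ref{3.4.7}), and then uses isolation in past equivalence far along the orbit of $\omega'$ (Lemma \ref{3.2.4}) to force $x_{(k_m,l_m)}=\sigma^{k_m}(\omega)$, i.e.\ $\tilde z=\imath_X(\omega)$, which contradicts non-isolatedness. None of this is captured by ``unique past of $\af\omega$ plus compatibility,'' so as it stands the upper bound $\#\pi_X^{-1}(\{\omega\})\le 3$ is asserted rather than proved.
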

\begin{proof}
We first show that there are at least three distinct elements in $\pi_X^{-1}(\{\omega\})$. The construction below of these three preimages is similar to that of \cite{B}.

For every $\af\in\{0,1\}$ and $(k,l)\in\I\setminus\D$, let $\mu_{(k,l)}^\af\in\La(X)$ with $|\mu_{(k,l)}^\af|=l-k-1$ be such that $\mu_{(k,l)}^\af\af\omega\in X$. Note that such finite word $\mu_{(k,l)}^\af$ is unique because $\omega\in U_{\jf}$. Now since $X$ has property (**), we can take $x_{(k,l)}^\af\in X$ satisfying
\[P_l(x_{(k,l)}^\af)=\{\mu_{(k,l)}^\af\af\omega_{[0,k)}\}\ \ (\af=0,1).\]
Define ${}_kx^\af_l=\omega_{[0,k)}x_{(k,l)}^\af$. Note that for $\af=0,1$, we have
\[\omega\stackrel{k,l}{\nsim}{}_kx^\af_l\ \ {\rm and}\ \ {}_kx^0_l\stackrel{k,l}{\nsim}{}_kx^1_l.\]
We now define representatives on each $(k,k)\in\D$.  Take $x_{(k,k)}^\af\in X$ with $P_k(x_{(k,k)}^\af)=\{\omega_{[0,k)}\}$.  Let ${}_kx_k^\af=\omega_{[0,k)}x_{(k,k)}^\af$. Now for $\af=0,1$, set $\tilde{x}^\af\in\tildeX$ satisfying
\[{}_k(\tilde{x}^\af)_l=[{}_kx^\af_l].\]
It is clear that $\pi(\tilde{x}^\af)=\omega$ and $\{\imath(\omega), \tilde{x}^0, \tilde{x}^1\}$ are three distinct elements. It is now enough to show that $\tilde{x}^\af$ are well-defined. We will only verify the case for $\af=0$, since the other one is exactly the same. For the simplicity of notations, we drop all the superscripts and abbreviate $\tilde{x}^0$ to $\tilde{x}$, for instance.

{\bf (i)} Let $(k_1,l_1)\preceq(k_2,l_2)$ be indices in $\I\setminus\D$. It is trivial that $(\kx)_{[0,k_1)}=\omega_{[0,k_1)}=(\Kx)_{[0,k_1)}$, so it remains to show that 
\[\{\mu_{(k_1,l_1)}0\omega_{[0,k_1)}\}=P_{l_1}(x_{(k_1,l_1)})=P_{l_1}(\omega_{[k_1,k_2)}x_{(k_2,l_2)}).\]
For every $\nu\in P_{l_1}(\omega_{[k_1,k_2)}x_{(k_2,l_2)})$, $\nu\omega_{[k_1,k_2)}\in P_{k_2-k_1+l_1}(x_{(k_2,l_2)})$, and since $l_1+k_2-k_1\leq l_2$, $\nu\omega_{[k_1,k_2)}$ is the suffix of an element in $P_{l_2}(x_{(k_2,l_2)})$, which follows $\nu=\nu'0\omega_{[0,k_1)}$ where $\nu'$ is the suffix of $\mu_{(k_2,l_2)}$ with length $l_1-k_1-1$. However, as $0\omega$ has a unique past, $\nu'=\mu_{(k_1,l_1)}$. 

{\bf (ii)} Let $(k_1,k_1)\in\D$ and $(k_2,l_2)\in\I\setminus\D$ with $k_1\leq k_2$. We shall confirm that 
\[\{\omega_{[0,k_1)}\}=P_{k_1}(\omega_{[k_1,k_2)}x_{(k_2,l_2)}).\]
Since for every $\nu\in P_{k_1}(\omega_{[k_1,k_2)}x_{(k_2,l_2)}$, $\nu\omega_{[k_1,k_2)}\in P_{k_2}(x_{(k_2,l_2)})$. The inequality $k_2\leq l_2$ infers that  $\nu\omega_{[k_1,k_2)}$ is the suffix of some element in $P_{l_2}(x_{(k_2,l_2)})=\{\mu_{(k_2,l_2)}0\omega_{[0,k_2)}\}$, which has to be $\omega_{[0,k_2)}$. Therefore, $\nu=\omega_{[0,k_1)}$.

{\bf (iii)} The case for $(k_1, k_1), (k_2,k_2)\in\D$ where $k_1\leq k_2$ is quite similar to the case (ii) and hence we omit the verification.

Now that we have shown $\#\pi_X^{-1}(\{\omega\})\geq3$. We next prove that these are exactly the only three elements on this fibre.

Take $\tilde{x}\in\pi^{-1}_X(\omega)$ and write $_k\tilde{x}_l={}_k[\omega_{[0,k)}x_{(k,l)}]_l$ for some $x_{(k,l)}\in X$. 
\quad\par
\quad\par
{\bf Claim.} If there exists $(k_0,l_0)\in\I$ such that $\# P_{l_0}(x_{(k_0,l_0)})=1$, then $\tilde{x}\in\{\tilde{x}^0, \tilde{x}^1\}$.

This is immediate. Suppose $P_{l_0}(x_{(k_0,l_0)})=\{\mu0\omega_{[0,k_0)}\}$, then every $x_{(k',l')}$ with $(k',l')\preceq (k_0,l_0)$ are determined. Also note that for all $(k',l')$ with $(k_0,l_0)\preceq (k',l')$ are also unique determined because $0\omega$ has a unique past. Then $x_{(k,l)}$ are all determined, because $\I$ is directed in the sense that given any two points $(k',l'), (k'',l'')\in\I$, we can always find $(k''',l''')\in\I$ with $(k',l')\preceq(k''',l''')$ and $(k'',l'')\preceq(k''',l''')$.
\quad\par
\quad\par
Now assume that $\# P_{l}(x_{(k,l)})\ge2$ for all $(k,l)\in\I$. We then show that $\tilde{x}=\imath_X(\omega)$, which will finish the proof. Fix any $(k_0,l_0)\in\I$. Note that this leads to the fact that, for every $(k,l)\in\I$ with $(k_0,l_0)\preceq (k,l)$, there exists $\omega_{(k,l)}\in Sp_l(X)$ and integers $0\leq n_{(k,l)}\leq l-1$ such that 
\[x_{(k,l)}=\sigma^{n_{(k,l)}}(\omega_{(k,l)}).\]
The finiteness of $Sp_l(X)$ implies that there is $\omega'\in Sp_l(X)$ and infinitely many $(k_m,l_m)\in\I$ satisfying $(k_0,l_0)\preceq (k_m,l_m)$, $k_m<k_{m+1}$ for all $m\ge1$, $\lim_{m\to\infty}k_m=\infty$,  and 
\[x_{(k_m,l_m)}=\sigma^{n_{(k_m,l_m)}}(\omega'),\ \ \ {\rm for\ all\ }m\ge1.\]
Upon passing to a subsequence, we may assume, according to Lemma \ref{3.4.2}, that $\{(k_m,l_m)\}_{m\ge1}$ is a chain in the sense that $(k_m,l_m)\preceq(k_{m'}, l_{m'})$ whenever $m\geq m'$. Besides, we may assume that $\{k_{m+1}-k_m: m\ge1\}$ are distinct. By the definition of cover, we then have
\[P_{l_m}(\sigma^{n_{(k_m,l_m)}}(\omega'))=P_{l_m}(\omega_{[k_m,k_{m+1})}\sigma^{n_{(k_{m+1},l_{m+1})}}(\omega')).\]
Now Lemma \ref{3.4.7} applies, indicating that $n_{(k_m,l_m)}$is unbounded. Hence we may assume, without loss of generality, that $n_{(k_m,l_m)}\to\infty$ as $m\to\infty$. 

On the other hand, from Lemma \ref{3.2.4}, we can take an $N\in\N$ such that $\sigma^n(\omega')$ is isolated in $l$ past equivalence whenever $l>n\geq N$. Choose $M\in\N$ such that $n_{(k_m,l_m)}>N$ whenever $m>M$.  This follows that 
\[x_{(k_m,l_m)}=\sigma^{n_{(k_m,l_m)}}(\omega')\ {\rm is\ }l-{\rm isolated}\]
whenever $m>M, l>n_{(k_m,l_m)}$. In particular, $x_{(k_m,l_m)}$ is $l_m$-isolated because $l_m>n_{(k_m,l_m)}$. Then, we know from
\[P_{l_m}(\sigma^{n_{(k_m,l_m)}}(\omega'))=P_{l_m}(\omega_{[k_m,k_{m+1})}x_{(k_{m+1}, l_{m+1})})\]
that $x_{(k_m,l_m)}=\sigma^{n_{(k_m,l_m)}}(\omega')=\omega_{[k_m,k_{m+1})}x_{(k_{m+1}, l_{m+1})}$ for all $m>M$. Finally, since $k_1<k_2<\cdots<k_m<k_{m+1}<\cdots$ and $\lim_{m\to\infty}k_m=\infty$, we conclude that for all $m>M$,
\[x_{(k_m,l_m)}=\sigma^{k_m}(\omega),\]
and therefore ${}_{k_m}\tilde{x}_{l_m}={}_{k_m}[\omega]_{l_m}$. Recall that $(k_m,l_m)\succeq(k_0,l_0)$ for every $m$, we then have ${}_{k_0}\tilde{x}_{l_0}={}_{k_0}[\omega]_{l_0}$. Finally, as the above discussion can be applied to every $(k_0,l_0)\in\I$, $\tilde{x}=\imath_X(\omega)$, the lemma follows.
\end{proof}

\begin{df}
Let $x\in X$ and $\{z^m\}_{m\leq0}$ be a sequence in $X$. We say $\{z^m\}_{m\leq0}$ is a {\it directed path terminating at $x$} if $z^0=x$ and $\sigma(z^{m-1})=z^m$ for all $m\leq0$. It is not hard to see that for every one-sided shift space $X$ with $\#Sp_l(X)<\infty$ and every $x\in X$, the number of directed path in $X$ terminating at $x$ is finite. We denote this number by $\mathfrak{d}(x)$.
\end{df}

It immediately follows that for any fixed non-maximal element $\omega\in Sp_l(X)$ and $m_0=\min\{m>0: \sigma^m(\omega)\in Sp_l(X)\}$, we have that  
\[\mathfrak{d}(\omega)=\mathfrak{d}(\sigma(\omega))=\cdots=\mathfrak{d}(\sigma^{m_0-1}(\omega))\ \ {\rm and}\ \ \mathfrak{d}(\omega)=\sum_{\omega'\in \sigma^{-1}(\{\omega\})}\mathfrak{d}(\omega').\]

\begin{thm}\label{4.2.4}
For every $x\in\bigsqcup_{\jf\in\J_X}Orb_{\sigma}(\omega_{\jf})$,
\[\#\pi_X^{-1}(\{x\})=\mathfrak{d}(x)+1.\]
\end{thm}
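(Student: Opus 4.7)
The plan is to extend the approach of Lemma~\ref{4.2.2}: for each directed backward path $P=\{z^m\}_{m\le 0}$ terminating at $x$, I would construct $\tilde{x}^P\in\pi_X^{-1}(\{x\})$ and argue that $\{\imath_X(x)\}\cup\{\tilde{x}^P:P\}$ exhausts $\pi_X^{-1}(\{x\})$, giving $\#\pi_X^{-1}(\{x\})=\mathfrak{d}(x)+1$.

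For the construction, for $(k,l)\in\I\setminus\D$ set $\nu^P_{(k,l)}:=z^{-(l-k)}_{[0,l-k)}$; then $\nu^P_{(k,l)}x_{[0,k)}=z^{-(l-k)}_{[0,l)}\in\La(X)$. By property~(**), pick $x^P_{(k,l)}\in X$ with $P_l(x^P_{(k,l)})=\{\nu^P_{(k,l)}x_{[0,k)}\}$ and set ${}_k(\tilde{x}^P)_l=[x_{[0,k)}x^P_{(k,l)}]$; on $\D$ choose $x^P_{(k,k)}$ with $P_k(x^P_{(k,k)})=\{x_{[0,k)}\}$. Compatibility across $(k_1,l_1)\preceq(k_2,l_2)$ reduces to the observation that $\nu^P_{(k_1,l_1)}$ consists of the last $l_1-k_1$ symbols of $\nu^P_{(k_2,l_2)}$ (via $\sigma^{(l_2-k_2)-(l_1-k_1)}(z^{-(l_2-k_2)})=z^{-(l_1-k_1)}$), and the rest is as in items~(i)--(iii) of the proof of Lemma~\ref{4.2.2}. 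Distinct paths $P\neq P'$ produce distinct $\tilde{x}^P\neq\tilde{x}^{P'}$ (they differ at a smallest index $-m$ where the first symbols of the respective path elements disagree), and each $\tilde{x}^P\neq\imath_X(x)$ since ${}_k(\tilde{x}^P)_l$ has singleton past at every $(k,l)\in\I\setminus\D$, whereas ${}_k[x]_l$ has past of size $\ge 2$ at sufficiently large $(k,l)$ (the left-special $\omega_\jf$ in the forward orbit of $x$ yields nontrivial backward branching from $\sigma^k(x)$).

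For the exhaustion, given $\tilde{x}\in\pi_X^{-1}(\{x\})$ with ${}_k\tilde{x}_l=[x_{[0,k)}x_{(k,l)}]$, I argue by cases. If $\#P_l(x_{(k,l)})\ge 2$ for all $(k,l)\in\I$, the non-singleton-case argument inside the proof of Lemma~\ref{4.2.2}---which uses only the finiteness of $Sp_l(X)$, Lemmas~\ref{3.4.2}, \ref{3.4.7} and~\ref{3.2.4}---forces $x_{(k,l)}=\sigma^k(x)$ on a cofinal family of indices, hence $\tilde{x}=\imath_X(x)$. If there is some $(k_0,l_0)$ with $\#P_{l_0}(x_{(k_0,l_0)})=1$, one adapts the Claim of Lemma~\ref{4.2.2}: the singleton past must propagate up to every $(k,l)\succeq(k_0,l_0)$, pinning down the unique $\nu^P$-words at every length and hence a unique path $P$ terminating at $x$ with $\tilde{x}=\tilde{x}^P$.

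I expect the main obstacle to be establishing the propagation in the singleton subcase, i.e., ruling out the possibility of a ``mixed'' $\tilde{x}$ whose past is singleton at $(k_0,l_0)$ but non-singleton at some higher $(k,l)\succeq(k_0,l_0)$. The Lemma~\ref{4.2.2} argument for this relies on the totally-unique-past property of $0\omega$ when $\omega\in U_\jf$, which does not carry over directly to general $x\in Orb_\sigma(\omega_\jf)$. For general $x$, one must instead use projective compatibility under $Q$, property~(**), and a careful analysis of where in the backward orbit of $x$ the representative $x^P_{(k_0,l_0)}$ lies, possibly arguing inductively on the number of left-special elements of $\jf$ lying between $x$ and $\omega_\jf$. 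Once this propagation step is carried through, the assembly of the infinite backward sequence yields the path $P$ and the count $\#\pi_X^{-1}(\{x\})=\mathfrak{d}(x)+1$ follows.
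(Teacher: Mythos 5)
Your construction of the candidate fibre elements $\tilde{x}^P$ indexed by directed backward paths, their pairwise distinctness, and the non-singleton case of the exhaustion (which forces $\tilde{x}=\imath_X(x)$) all match what the paper does and are sound. The problem is exactly the step you flag and do not carry out: showing that a singleton past at one index $(k_0,l_0)$ propagates to all $(k,l)\succeq(k_0,l_0)$ and pins $\tilde{x}$ to a single path $P$. This is a genuine gap, not a routine verification. The mechanism that makes the Claim in Lemma~\ref{4.2.2} work is that for $\omega\in U_\jf$ the point $\af\omega$ has a \emph{totally} unique past, so the singleton $P_{l_0}(x_{(k_0,l_0)})=\{\mu\af\omega_{[0,k_0)}\}$ determines every higher class. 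For a general $x\in Orb_\sigma(\omega_\jf)$ the path elements $z^{-m}$ can themselves be left special (indeed they must be whenever $\mathfrak{d}(x)>1$), so a singleton past at $(k_0,l_0)$ identifies only a finite initial segment of a backward branch, and nothing in your argument rules out a ``mixed'' compatible family that follows one branch up to depth $l_0-k_0$ and then branches, or becomes non-singleton, at larger $l-k$. Your suggestion to handle this ``inductively on the number of left-special elements between $x$ and $\omega_\jf$'' is the right instinct but is not a proof.

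The paper avoids this difficulty entirely by changing where the fibre analysis is done. It performs the direct construction-plus-exhaustion only at points with a unique past and at left special points $x$ (where the $\mathfrak{d}(x)$ branches are genuinely separated by property (**)), and then \emph{transports} the count along the orbit using the dynamics of the cover: if $m_0(\omega)=\min\{m>0:\sigma^m(\omega)\in Sp_l(X)\}$ and $1\leq s<m_0(\omega)$, then any $\tilde{y}\in\pi_X^{-1}(\{\sigma^s(\omega)\})$ has a $\sigma_{\tildeX}^s$-preimage $\tilde{z}$ (surjectivity of $\sigma_{\tildeX}$), and $\pi_X(\tilde{z})$ must equal $\omega$ because no left special element lies strictly between; hence $\sigma^s_{\tildeX}$ maps $\pi_X^{-1}(\{\omega\})$ onto $\pi_X^{-1}(\{\sigma^s(\omega)\})$ and the count $\mathfrak{d}(\omega)+1=\mathfrak{d}(\sigma^s(\omega))+1$ is carried forward. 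If you want to salvage your pointwise approach, you should replace the propagation step by this forward-transport argument; as written, the proposal is incomplete at its central step.
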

\begin{proof}
%Hence it suffices to show that 
%\begin{align*}
%\#\pi^{-1}_X(\sigma^{m_0}(\omega))&=1+\sum_{\omega'\in Orb_\sigma^{-}(\sigma^{m_0}(\omega))\cap Sp_l(X)}\mathfrak{d}(\omega')\ {\rm and},\\
%\#\pi^{-1}_X(\omega)&=\#\pi^{-1}_X(\sigma(\omega))=\cdots=\#\pi^{-1}_X(\sigma^{m_0-1}(\omega)).
%\end{align*}
First, we verify the situation for which $x$ has a unique past, that is, $\mathfrak{d}(x)=1$. This could happen for example when $x$ lies in the backward orbit of some $\omega\in U_{\jf}$. Then it is clear that either $0x\in X$ or $1x\in X$. In any case, the procedure of Lemma \ref{4.2.2} defines a non-isolated point in $\pi_X^{-1}(\{x\})$ and an exactly same argument as in Lemma \ref{4.2.2} shows that $\#\pi_X^{-1}(\{x\})=2=\mathfrak{d}(x)+1$.

For the case when $x\in Sp_l(X)$, according to the definition of the integer-valued function $\mathfrak{d}$, there are at most $\mathfrak{d}(x)$ finite prefixes 
\[\mu_{(k,l)}^1, \mu_{(k,l)}^2, \cdots, \mu_{(k,l)}^{\mathfrak{d}(x)}\]
with $|\mu_{(k,l)}^i|=l-k$ for sufficiently large $(k,l)\in \mathcal{I}$ such that $\mu_{(k,l)}^ix\in X$ and moreover, for each pair of $\mu_{(k,l)}^ix$ and $\mu_{(k,l)}^jx$ ($i\neq j$) and every $n\in\mathbb{N}$, $\mu_{(k,l)}^ix\neq\sigma^n(\mu_{(k,l)}^jx)$ and $\mu_{(k,l)}^jx\neq\sigma^n(\mu_{(k,l)}^ix)$. Since $X$ has property $(**)$ as assumed, we can take
\[x_{(k,l)}^1, x_{(k,l)}^2,\cdots, x_{(k,l)}^{\mathfrak{d}(x)}\in X\]
satisfying $P_l(x_{(k,l)}^i)=\{\mu_{(k,l)}^ix_{[0,k)}\}$ for $i=0,1,\cdots, \mathfrak{d}(x)$.

Now, an easy adaption of the procedure in Lemma \ref{4.2.2} defines $\mathfrak{d}(x)$ distinct elements in $\pi_X^{-1}(\{x\})$ such that if $\tilde{x}\in\pi_X^{-1}(\{x\})$ is not one of the points we constructed above, then $\tilde{x}=\imath_X(x)$. This proves that for any left special element $x$ in the whole orbit of any maximal left special element $\omega_{\jf}$,
\[\pi_X^{-1}(\{x\})=\mathfrak{d}(x)+1.\]

Finally, let us consider those elements $x\in \bigsqcup_{\jf\in\J_X}Orb_{\sigma}(\omega_{\jf})$ which are not left special. This divides into the following three cases:

(i) $x$ lies in the backward orbit of some $\omega\in Sp_l(X)$ having a unique past;

(ii) $x$ lies in the forward orbit of some maximal element $\omega_\jf$ for $\jf\in\mathcal{J}_X$;

(iii) there are distinct left special elements $\omega,\omega'$ such that $\omega$ lies in the backward orbit of $x$ and $\omega'$ lies in the forward orbit of $x$.

Note that the case (i) has already been included in the the first paragraph above. For (ii) and (iii), let $\omega$ be  a left special element and
\[m_0(\omega)=\min\{m>0: \sigma^m(\omega)\in Sp_l(X)\}.\]
Without loss of generality, we may say $m_0(\omega)=\infty$ if $\omega$ is a maximal element. We now reach (ii) and (iii) by showing that
\[\#\pi_X^{-1}(\{\omega\})=\#\pi_X^{-1}(\{\sigma(\omega)\})=\cdots=\#\pi_X^{-1}(\{\sigma^s(\omega)\})=\mathfrak{d}(\omega)+1\]
for any $1\leq s<m_0(\omega)$.

For this, write $\pi_X^{-1}(\{\omega\})=\{\imath_X(\omega), \tilde{x}^1, \tilde{x}^2, \cdots, \tilde{x}^{\mathfrak{d}(\omega)}\}$, where $\tilde{x}^1, \tilde{x}^2, \cdots, \tilde{x}^{\mathfrak{d}(\omega)}$ are the elements in $\pi_X^{-1}(\{\omega\})$ constructed above. We then have
\[\pi^{-1}_X(\{\sigma^{s}(\omega)\})=\{\sigma^s_{\tildeX}(\imath(\omega)), \sigma^s_{\tildeX}(\tilde{x}^1), \sigma^s_{\tildeX}(\tilde{x}^2), \cdots, \sigma^s_{\tildeX}(\tilde{x}^{\mathfrak{d}(\omega)})\},\]
It is clear that these $\mathfrak{d}(\omega)+1$ elements are distinct and in the preimage of $\sigma^s(\omega)$. Therefore it suffices to show that there are no more elements in the fibre. Suppose that $\tilde{y}\in \pi^{-1}_X(\{\sigma^{s}(\omega)\})$.  Since $\sigma_{\tildeX}$ is surjective, there exists $\tilde{z}$ such that $\sigma_{\tildeX}^s(\tilde{z})=\tilde{y}$. Take $z=\pi_X(\tilde{z})$.  Since $\pi_X$ is a factor, $\sigma^s(z)=\sigma^s(\omega)$. If $z\neq \omega$, then there exists $1\leq j\leq s<m_0(\omega)$ such that $\sigma^j(\omega)$ is left special, but this contradicts to the minimality of $m_0$. Therefore, $z=\omega$ and hence $\tilde{z}\in\{\imath_X(\omega), \tilde{x}^1, \tilde{x}^2, \cdots, \tilde{x}^{\mathfrak{d}(\omega)}\}$. This shows that there are no more elements in the fibre. Noting that
\[\pi_X^{-1}(\{\sigma^s(\omega)\})=\mathfrak{d}(\omega)+1=\mathfrak{d}(\sigma^s(\omega))+1,\]
(ii) and (iii) follow as desired.
\end{proof}

For the last part of the subsection, we consider those  $z\in X$ having totally unique past.
\begin{thm}\label{4.2.5}
Let $z\in X\setminus\bigsqcup_{\jf\in\J_X}Orb_{\sigma}(\omega_\jf)$. Then $\#\pi_X^{-1}(z)=1$.
\end{thm}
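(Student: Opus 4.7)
The plan is to show that any $\tilde{z} \in \pi_X^{-1}(\{z\})$ must coincide with $\imath_X(z)$ by verifying componentwise equality in the projective limit. The key preparatory fact is that $\#P_l(\sigma^k(z)) = 1$ for every $k \geq 0$ and $l \geq 1$. To see this, I would first show $z$ is not right-tail equivalent to any left special element: if $z \sim_{rte} \omega \in Sp_l(X)$, then $\omega$ lies in some $\jf$, so $z \sim_{rte} \omega_\jf$ by Proposition~\ref{3.2.2}, giving $\sigma^a(z) = \sigma^b(\omega_\jf)$ for some $a, b \geq 0$. A short case analysis using non-periodicity of $\omega_\jf$ then places $z$ in $Orb_\sigma(\omega_\jf)$, contradicting the hypothesis. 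Next, if $\#P_l(\sigma^k(z)) \geq 2$ for some $(k,l)$, two distinct length-$l$ pasts of $\sigma^k(z)$ would share a common suffix and produce a left special element $\omega$ with $\sigma^n(\omega) = \sigma^k(z)$ for some $0 \leq n < l$; this makes $z \sim_{rte} \omega$, a contradiction. Write $P_l(\sigma^k(z)) = \{\mu_{k,l}\}$.

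Next I fix $\tilde{z} \in \pi_X^{-1}(\{z\})$ and $(k_0, l_0) \in \I$, and set $S_{k_0, l_0} := P_{l_0}(\sigma^{k_0}(x))$ for any representative $x$ of ${}_{k_0}\tilde{z}_{l_0}$. It suffices to prove $S_{k_0, l_0} = \{\mu_{k_0, l_0}\}$. I would choose indices $(k_n, l_n) \in \I$ with $(k_0, l_0) \preceq (k_n, l_n)$ and $k_n \to \infty$, and pick representatives $x_n$ of ${}_{k_n}\tilde{z}_{l_n}$. By projective compatibility each $x_n$ also represents ${}_{k_0}\tilde{z}_{l_0}$, so $P_{l_0}(\sigma^{k_0}(x_n)) = S_{k_0, l_0}$ for every $n$; meanwhile $(x_n)_{[0, k_n)} = z_{[0, k_n)}$ forces $\sigma^{k_0}(x_n) \to \sigma^{k_0}(z)$ in $X$.

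The closing step invokes upper semi-continuity of the past map: if $\mu \in P_{l_0}(y_n)$ for infinitely many $n$ and $y_n \to y$ in $X$, then $\mu y_n \to \mu y$ and closedness of $X$ gives $\mu y \in X$, hence $\mu \in P_{l_0}(y)$. Since $P_{l_0}(\sigma^{k_0}(x_n)) = S_{k_0, l_0}$ is constant in $n$, this yields $S_{k_0, l_0} \subseteq P_{l_0}(\sigma^{k_0}(z)) = \{\mu_{k_0, l_0}\}$; non-emptiness of $S_{k_0, l_0}$ (since $\sigma(X) = X$ guarantees left extensions of all lengths) forces equality. Hence ${}_{k_0}\tilde{z}_{l_0} = {}_{k_0}[z]_{l_0}$ for every $(k_0, l_0)$, so $\tilde{z} = \imath_X(z)$. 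I expect the main obstacle to be the case analysis in the first step --- tracking how $\sigma^a(z) = \sigma^b(\omega_\jf)$ together with non-periodicity of $\omega_\jf$ forces $z \in Orb_\sigma(\omega_\jf)$ --- after which the rest flows cleanly from the projective structure and one soft continuity observation, bypassing the heavier machinery (Lemmas~\ref{3.4.7} and~\ref{3.2.4}) that was needed in Lemma~\ref{4.2.2}.
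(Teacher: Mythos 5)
Your proposal is correct, but it reaches the conclusion by a genuinely different route from the paper's. The paper proves the identity $P_l(z_{(k,l)})=P_l(z_{[k,\infty)})$ in two claims: Claim~1 lifts the unique past of $z$ to $\tilde z$ via Lemma~\ref{3.4.6}, pulls $\tilde z$ back by $\sigma_{\tildeX}^{l-k}$, and deduces that the one genuine past $\mu z_{[0,k)}$ lies in $P_l(z_{(k,l)})$; Claim~2 then invokes Lemma~\ref{3.1.4} at the higher index $(N,l+N-k)$ to force $\#P_l(z_{(k,l)})=1$. You instead prove the \emph{reverse} inclusion $S_{k_0,l_0}=P_{l_0}(\sigma^{k_0}(x))\subseteq P_{l_0}(\sigma^{k_0}(z))$: representatives $x_n$ of ${}_{k_n}\tilde z_{l_n}$ with $(k_0,l_0)\preceq(k_n,l_n)$ and $k_n\to\infty$ all carry the same past set $S_{k_0,l_0}$ (projective compatibility plus the definition of $\stackrel{k,l}{\sim}$) while their length-$k_n$ prefixes are pinned to $z_{[0,k_n)}$ by the definition of $\pi_X$, so $\sigma^{k_0}(x_n)\to\sigma^{k_0}(z)$ and closedness of $X$ together with continuity of $w\mapsto\mu w$ push every $\mu\in S_{k_0,l_0}$ into $P_{l_0}(\sigma^{k_0}(z))$; non-emptiness of $S_{k_0,l_0}$ (from $\sigma(X)=X$) and $\#P_{l_0}(\sigma^{k_0}(z))=1$ then give equality, hence ${}_{k_0}\tilde z_{l_0}={}_{k_0}[z]_{l_0}$. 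This checks out, and it buys something: the inclusion $P_{l}(\sigma^{k}(x))\subseteq P_{l}(\sigma^{k}(\pi_X(\tilde z)))$ is established for \emph{every} $\tilde z\in\tildeX$ with no hypothesis on $z$, so the theorem becomes an immediate corollary of the totally-unique-past property of the fibre point, bypassing Lemmas~\ref{3.1.4} and~\ref{3.4.6} entirely; the paper's route, by contrast, reuses machinery already set up for Lemma~\ref{4.2.2}. The only place you must write out carefully is the preliminary reduction: when $\sigma^a(\omega)=\sigma^b(z)$ for a left special $\omega$, non-injectivity of $\sigma$ means the two backward paths may merge at an intermediate point rather than at $z$ or $\omega_\jf$ itself; that merge point is then left special, so $z$ lands in $Orb_\sigma(\omega_{\jf'})$ for a possibly \emph{different} class $\jf'$ --- the contradiction is with the full disjoint union in the hypothesis, not necessarily with the original $\jf$. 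With that caveat the argument is complete.
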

\begin{proof}
Let $\tilde{z}\in\tildeX$ with $\pi_X(\tilde{z})=z$. Let us show that $\tilde{z}=\imath_X(z)$. Write ${}_k\tilde{z}_l={}_k[z_{[0,k)}z_{(k,l)}]_l$. We turn to prove that
\[z\stackrel{k,l}{\sim}z_{[0,k)}z_{(k,l)}\]
for all $(k,l)\in\I$. Obviously they have the same initial sections of length $k$. Therefore, it remains to verify that
\[P_l(z_{[k,\infty)})=P_l(z_{(k,l)}).\]
Write $P_l(z_{[k,\infty)})=\{\mu z_{[0,k)}\}$ where $\mu$ is the unique prefix of length $l-k$. We turn to show the following claims to finish the proof.

{\bf Claim 1.} $\mu z_{[0,k)}z_{(k,l)}\in X$: Since $z$ has a unique past, so does $\tilde{z}$. Take the unique $\tilde{z}'\in\tildeX$ so that $\sigma_{\tildeX}(\tilde{z}')=\tilde{z}$. Note that this implies
\[\sigma^{l-k}\pi_X(\tilde{z}')=\pi_X\sigma_{\tildeX}^{l-k}(\tilde{z}')=\pi_X(\tilde{z})=z,\]
and hence $\pi_X(\tilde{z}')=\mu z$. Denote $\tilde{z}'={}_k[{}_k\tilde{z}'_l]_l$. We then have $({}_l\tilde{z}'_l)_{[0,l)}=\mu z_{[0,k)}$. On the other hand, 
\[{}_k\sigma_{\tildeX}^{l-k}(\tilde{z}')_l={}_k\tilde{z}_l={}_k[\sigma^{l-k}({}_l\tilde{z}'_l)]_l,\]
which tells us $z_{[0,k)}z_{(k,l)}={}_k\tilde{z}_l\stackrel{k,l}{\sim}\sigma^{l-k}({}_l\tilde{z}'_l)$. Therefore,
\[\mu z_{[0,k)}\in P_l(\sigma^l({}_l\tilde{z}'_l))=P_l(z_{(k,l)}).\]

{\bf Claim 2.}  $\# P_l(z_{(k,l)})=1$: Since $z$ has a totally unique past, $\sigma^k(z)$ has a unique past. By Lemma \ref{3.1.4}, we can choose $N_1\in\N$ with the following property: 
\[{\rm whenever}\ y\in X\ {\rm with}\ y_{[0,N_1]}=\sigma^k(z)_{[0,N_1]},\ \#P_l(y)=1.\]
Set $N=N_1+k+1$.  Since $(N, l+N-k)\succeq (k,l)$, we have 
\[{}_N\tilde{z}_{l+N-k}\stackrel{k,l}{\sim}{}_k\tilde{z}_l,\]
which follows that
\[P_l(\sigma^k({}_N\tilde{z}_{l+N-k}))=P_l(\tilde{z}_{(k,l)}).\]
However, since $\sigma^k({}_N\tilde{z}_{l+N-k})=z_kz_{k+1}\cdots z_{N-1}\tilde{z}_{(N,l+N-k)}$, it has a prefix of length $N-k=N_1+1$, equal to $\sigma^k(z)_{[0,N_1]}$. Therefore, by how we choose $N_1$, we conclude that 
\[\#P_l(\tilde{z}_{(k,l)})=\#P_l(\sigma^k({}_N\tilde{z}_{l+N-k}))=1.\]
This completes the proof.
\end{proof}
\quad\par

\subsection{Non-isolated points in the cover}
\begin{thm}\label{4.3.1}
Let
\[\tilde{\Lambda}_X=\tildeX\setminus\bigsqcup_{\jf\in\J_X}Orb_{\sigma_{\tildeX}}(\imath_X(\omega_\jf))\]
be the non-isolated points in the cover. Then $\tilde{\Lambda}_X\cong\X$, i.e., there is a canonical conjugacy from $(\tilde{\Lambda}_X,\sigma_{\tildeX})$ to $(\X,\sigma)$, where $\X$ is the two-sided shift associated with $X$.
\end{thm}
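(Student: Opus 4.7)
The plan is to construct an explicit continuous map $\phi\colon\X\to\tildeX$, show that it is $\sigma$--$\sigma_{\tildeX}$-equivariant with image exactly $\tilde{\Lambda}_X$, and verify it is a homeomorphism; the inverse $\phi^{-1}$ then supplies the desired canonical conjugacy. For $z\in\X$ and $(k,l)\in\I$ I set
\[{}_k\phi(z)_l:={}_k[y_{(k,l)}]_l,\]
where $y_{(k,l)}\in X$ is any point satisfying $y_{(k,l),[0,k)}=z_{[0,k)}$ and $P_l(y_{(k,l),[k,\infty)})=\{z_{[k-l,k)}\}$. Such a $y_{(k,l)}$ exists by property~(**): invoking~(*), pick $y'\in X$ with $P_l(y')=\{z_{[k-l,k)}\}$; then $P_k(y')$ is forced to equal the singleton $\{z_{[0,k)}\}$ (the unique length-$k$ suffix of $z_{[k-l,k)}$), so $y_{(k,l)}:=z_{[0,k)}y'\in X$ works. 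The resulting class is independent of the choice of $y'$.

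Compatibility of the family $({}_k\phi(z)_l)_{(k,l)\in\I}$ under the projective system is direct: if $(k_1,l_1)\preceq(k_2,l_2)$ and $y$ represents ${}_{k_2}\phi(z)_{l_2}$, then $P_{l_2}(y_{[k_2,\infty)})=\{z_{[k_2-l_2,k_2)}\}$ trims by taking suffixes to the singleton $\{z_{[k_1-l_1,k_2)}\}$ at length $l_1+k_2-k_1$, and stripping off the known prefix $y_{[k_1,k_2)}=z_{[k_1,k_2)}$ yields $P_{l_1}(y_{[k_1,\infty)})=\{z_{[k_1-l_1,k_1)}\}$, matching ${}_{k_1}\phi(z)_{l_1}$. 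Continuity of $\phi$ follows from the base $U(z,k,l)$ for the topology of $\tildeX$: if $z^n\to z$ in $\X$, eventually $z^n$ agrees with $z$ on $[k-l,k)$, so ${}_k\phi(z^n)_l={}_k\phi(z)_l$. The identity $\phi\circ\sigma=\sigma_{\tildeX}\circ\phi$ comes from the observation that if $y$ represents ${}_{k+1}\phi(z)_l$ then $\sigma(y)_{[0,k)}=\sigma(z)_{[0,k)}$ and $P_l(\sigma(y)_{[k,\infty)})=P_l(y_{[k+1,\infty)})=\{z_{[k+1-l,k+1)}\}=\{\sigma(z)_{[k-l,k)}\}$. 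Injectivity of $\phi$ is seen by recovering $z_{[0,\infty)}$ from the initial coordinates and $z_{(-\infty,0)}$ from the past singletons.

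The main effort, and expected obstacle, is to prove $\phi(\X)=\tilde{\Lambda}_X$. I split by the type of $x:=z_{[0,\infty)}\in X$. If $x$ has a totally unique past, then Theorem~\ref{4.2.5} gives $\pi_X^{-1}(\{x\})=\{\imath_X(x)\}$, which is non-isolated by Lemma~\ref{3.4.8}; since $P_l(x_{[k,\infty)})=\{z_{[k-l,k)}\}$ is already a singleton, one has $\phi(z)=\imath_X(x)$. If instead $x\in Orb_\sigma(\omega_\jf)$ for some $\jf\in\J_X$, Theorem~\ref{4.2.4} decomposes $\pi_X^{-1}(\{x\})=\{\imath_X(x)\}\sqcup\{\tilde{x}^1,\dots,\tilde{x}^{\mathfrak{d}(x)}\}$ with $\imath_X(x)$ the unique isolated element; meanwhile there are exactly $\mathfrak{d}(x)$ elements of $\X$ projecting to $x$, one per directed path terminating at $x$. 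The delicate step is to match $\phi(z)$ with the $\tilde{x}^j$'s: for $x=\omega\in U_\jf$, one checks directly that ${}_k\phi(z)_l$ carries the past singleton $\{\mu_{(k,l)}^{z_{-1}}z_{-1}\omega_{[0,k)}\}$ appearing in the construction of $\tilde{x}^{z_{-1}}$ in Lemma~\ref{4.2.2}, using that $z_{-1}\omega$ has a unique past since $\omega\in U_\jf$; the general case in $Orb_\sigma(\omega_\jf)$ follows by applying the established equivariance of $\phi$ along the orbit and its backward extensions. Moreover $\phi(z)\neq\imath_X(x)$ whenever $x\in Orb_\sigma(\omega_\jf)$, because the past of ${}_k\imath_X(x)_l$ is the genuinely non-singleton set $P_l(x_{[k,\infty)})$ for sufficiently large $l$.

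Finally, $\phi\colon\X\to\tilde{\Lambda}_X$ is a continuous equivariant bijection. Since $\X$ is compact and $\tilde{\Lambda}_X$ is Hausdorff as a closed subset of $\tildeX$ (being the complement of the open set of isolated orbits by Theorem~\ref{4.1.1}), $\phi$ is a homeomorphism, and $\phi^{-1}$ is the desired canonical conjugacy from $(\tilde{\Lambda}_X,\sigma_{\tildeX})$ to $(\X,\sigma)$.
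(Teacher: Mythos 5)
Your proposal is correct, and while it follows the same broad strategy as the paper --- build an explicit shift-equivariant map from $\X$ into the cover and finish by compactness of $\X$ and Hausdorffness of $\tildeX$ --- the execution differs in two ways worth recording. First, the paper defines its map $\Phi$ procedurally in two cases (sending $z$ to $\imath_X(z_{[0,\infty)})$ when the forward ray has a totally unique past, and otherwise tracing back to the first left special element on the ray to select the fibre element determined by the symbol $z$ carries there), whereas your closed formula, declaring ${}_k\phi(z)_l$ to be the class with prefix $z_{[0,k)}$ and past singleton $\{z_{[k-l,k)}\}$, subsumes both cases at once; this turns compatibility, continuity, equivariance and injectivity into short coordinate checks rather than case analyses. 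Second, and more substantively, surjectivity is reached by a different route: the paper first proves that every point of $\tilde{\Lambda}_X$ has a unique past (via the induction on the singletons $E_i$ resting on Theorem \ref{4.2.4}), so that $\sigma_{\tildeX}^{-n}(\tilde z)$ is defined and can be pushed down by $\pi_X$ to manufacture a preimage in $\X$; you instead count fibres, noting that over a point $x$ with totally unique past both $\pi_X^{-1}(\{x\})\cap\tilde{\Lambda}_X$ and $\{z\in\X: z_{[0,\infty)}=x\}$ are singletons (Theorem \ref{4.2.5}), while over $x\in Orb_\sigma(\omega_\jf)$ both have exactly $\mathfrak{d}(x)$ elements (Theorem \ref{4.2.4} combined with Lemma \ref{3.4.5} and Theorem \ref{4.1.1}, which identify $\imath_X(x)$ as the unique isolated point of the fibre), so that an injection between them is automatically a bijection. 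This is a genuine simplification: the ``delicate matching'' with the $\tilde{x}^j$ of Lemma \ref{4.2.2} that you flag as the expected obstacle is not actually needed, since all your argument requires is $\phi(z)\neq\imath_X(x)$, which you already obtain by comparing singleton against non-singleton pasts. What the paper's route buys in exchange is the standalone intermediate fact that $\tilde{\Lambda}_X$ consists of unique-past points (hence that $\sigma_{\tildeX}$ restricts to a homeomorphism there), which in your argument is recovered only a posteriori from the conjugacy with the invertible system $\X$.
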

\begin{proof}
Note that since the set of isolated points is open, $\tilde{\Lambda}_X$ is closed and invariant. We first show that every element of $\tilde{\Lambda}_X$ has a unique past. For this, by Lemma \ref{3.4.6}, we only need to verify that, for any fixed $k>0$, $\omega\in Sp_l(X)$ and $\tilde{z}\in\pi_X^{-1}(\sigma^k(\omega))\setminus\{\imath_X(\sigma^k(\omega))\}$, $\tilde{z}$ has a unique past. 

Denote $z=\sigma^k(\omega)$. Then $\pi_X(\tilde{z})=z$.
Define 
\[m_z=\min\{m>0: \exists\,\omega'\in Sp_l(X)\,(\sigma^m(\omega')=z)\}.\]
Note that because $\omega$ is left special and $z=\sigma^k(\omega)$, $m_z$ is well-defined. Then we claim that the sets
\[E_i=\{\tilde{y}\in\tilde{X}: \sigma_{\tilde{X}}^i(\tilde{y})=\tilde{z}\}\]
are singletons for $i=1,2,\cdots,m_z$. In fact, for $i=1$, if there are $\tilde{y}_1, \tilde{y}_2\in E_1$, then 
\[\sigma\circ\pi_X(\tilde{y_1})=\pi_X\circ\sigma_{\tilde{X}}(\tilde{y}_1)=\pi_X\circ\sigma_{\tilde{X}}(\tilde{y}_2)=\sigma\circ\pi_X(\tilde{y}_2),\]
and therefore $\pi_X(\tilde{y}_1)=\pi_X(\tilde{y}_2)\in\sigma^{-1}(\{z\})$. This means $\tilde{y}_1, \tilde{y}_2\in \pi_X^{-1}(\sigma^{-1}(\{z\}))$ with $\sigma_{\tilde{X}}(\tilde{y}_1)=\sigma_{\tilde{X}}(\tilde{y}_2)$. However, according to the final paragraph of Theorem \ref{4.2.4} and the minimality of $m_z$, the restriction of $\sigma_{\tilde{X}}$ from $\pi_X^{-1}(\sigma^{-1}(\{z\}))$ to $\pi_X^{-1}(\{z\})$ is injective and onto, which means that $\tilde{y}_1=\tilde{y}_2$, and therefore $E_1$ is a singleton. Note that by the minimality of $m_z$, we can clearly apply the same argument to the case $i=2,3,\cdots,m_z$.

For $i=m_z+1$, from the construction in Lemma \ref{4.2.2}, there is a unique element corresponding to the prefix $0$ or $1$. Therefore $E_{m_z+1}$ is a singleton as well. Repeating this procedure and noting that there exists $K>0$ such that $x$ has a unique past whenever $k\geq K$ and $x\in\sigma^{-k}(z)$, we conclude  that $\tilde{z}$ has a unique past.

On the other hand, it is quite clear that $\sigma_{\tilde{X}}$ is a surjective map restricted on $\tilde{\Lambda}_X$, and from which we can then conclude that $\sigma_{\tilde{X}}$ is a homeomorphism from $\tilde{\Lambda}_X$ onto $\tilde{\Lambda}_X$.

Now we construct a map from $\X$ to $\tilde{\Lambda}_X$. This is a natural construction which is similar to that of the Sturmian case. Specifically,

(i) If $x\in\X$ such that $\sigma^k(x_{[0,\infty)})$ has a unique past for all $k\ge0$, we set
\[\Phi(x)=\tilde{x}=\imath_X(x_{[0,\infty)}),\]
where $\imath_X(x_{[0,\infty)})$ is the unique element in $\pi_X^{-1}(x_{[0,\infty)})$ by Theorem \ref{4.2.5}. Explicitly, for every $x=(x_n)_{n\in\Z}\in\underline{X}$, since $\underline{X}$ is an inverse limit, we regard $x$ as a sequence of right infinite words in $X$:
\[x=\{x_{[-n,\infty)}\}_{n\ge1}.\]
Then we have $\sigma_{\tilde{X}}\circ \Phi(x)=\sigma_{\tilde{X}}\circ\imath_X(x_{[0,\infty)})=\imath_X(x_{[1,\infty)})=\Phi\circ\sigma(x)$.

(ii) If $x\in\X$ such that there is some $k\ge0$ making $\sigma^k(x_{[0,\infty)})$ don't have a unique past, since $X$ has property (**), we can choose $K\ge0$ such that every element in $Orb^+_\sigma(\sigma^K(x))$ is not left special anymore. Therefore, it is enough to determine $\Phi(\sigma^K(x))$. By abuse of notation, we denote $\sigma^K(x)$ by $x$. Let $k$ be the smallest natural number such that 
\[x_{[-k,\infty)}\in Sp_l(X).\]
Then there is a unique element in $\pi_X^{-1}(x_{[-k,\infty)})$ corresponding to the prefix $x_{-k-1}\in\{0,1\}$. Now by applying this argument to $x_{[-(k+1),\infty)}$, together with the assumption that $X$ only has finitely many of special elements, we get a unique element $\Phi(x)$ in $\tilde{\Lambda}_X$. Similar to the case (i), It is straightforward to verify that $\sigma_{\tilde{X}}\circ\Phi(x)=\Phi\circ\sigma(x)$ holds naturally.

Finally, to see that $\Phi$ is a homeomorphism, we first notice that since the topology on $\underline{X}$ and $\tilde{\Lambda}_X$ are both generated by the cylinder sets and that $\Phi$ doesn't change any finite prefix of any right infinite word in the sequence $x=\{x_{[-n,\infty)}\}$, $\Phi$ is clearly continuous and injective. For the surjectivity of $\Phi$, let $\tilde{z}\in\tilde{\Lambda}_X$. Since $\tilde{z}$ has a unique past, $\sigma_{\tilde{X}}^{-n}(\tilde{z})$ is well-defined for all $n\ge1$. Then define a sequence $z$ in $X$ by
\[z=\{\pi_X\circ\sigma_{\tilde{X}}^{-n}(\tilde{z})\}_{n\ge1}\]
Since $\sigma\circ\pi_X\circ\sigma_{\tilde{X}}^{-n-1}(\tilde{z})=\pi_X\circ\sigma_{\tilde{X}}\circ\sigma_{\tilde{X}}^{-n-1}(\tilde{z})=\pi_X\circ\sigma_{\tilde{X}}^{-n}(\tilde{z})$, we see that $z$ corresponds to an element in the projective system $X\stackrel{\sigma}{\leftarrow}X$ and defines a point in $\underline{X}$. From the construction above, we immediately have $\Phi(z)=\tilde{z}$. This verifies the surjectivity of $\Phi$. Finally, since $\underline{X}$ is compact and $\tilde{\Lambda}_X$ is Hausdorff, $\Phi$ is a homeomorphism.
\end{proof}

We now close Sect. \ref{sec:4} by summarizing in the following theorem the main results in the section.
\begin{thm}\label{4.3.2}
Let $(X,\sigma)$ be a one-sided minimal shift over $\{0,1\}$ on an infinite space $X$ with finitely many left special elements. Let $\tildeX$ be its cover. Then we have the following.

(1) The set $I(\tildeX)$ of isolated points in $\tildeX$ is a disjoint union:
\[I(\tildeX)=\bigsqcup_{\jf\in\J_X}\imath(Orb_{\sigma}(\omega_\jf))\]
which forms a dense open subset of $\tildeX$;

(2) The subsystem $(\tildeX\setminus I(\tildeX), \sigma_{\tildeX}|_{\tildeX\setminus I(\tildeX)})$ on the set of non-isolated points is  invertible and conjugate to the canonical two-sided shift space $\X$ of $X$;

(3) For every $x\in X\setminus\bigsqcup_{\jf\in\J_X}Orb_{\sigma}(\omega_\jf)$, 
\[\#\pi_X^{-1}(x)=1.\]
Moreover, for every $x\in\bigsqcup_{\jf\in\J_X}Orb_{\sigma}(\omega_{\jf}),$
\[\#\pi_X^{-1}(x)=\mathfrak{d}(x)+1,\]
where $\mathfrak{d}(x)$ is the number of directed path in $X$ terminating at $x$.
\end{thm}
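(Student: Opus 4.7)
The proof of Theorem \ref{4.3.2} will essentially be an assembly of the three main results established in Section \ref{sec:4}, so my plan is to verify each item by directly invoking the appropriate earlier theorem.

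First I would dispatch (1) by quoting Theorem \ref{4.1.1}, which identifies $I(\tildeX)$ exactly with $\imath\bigl(\bigsqcup_{\jf\in\J_X}Orb_{\sigma}(\omega_\jf)\bigr)$ and asserts density. The only thing to observe separately is that the union is disjoint: this is immediate from the fact that the $Orb_{\sigma}(\omega_\jf)$ are pairwise disjoint in $X$ (distinct right tail equivalence classes cannot intersect in any orbit, by definition of $\sim_{rte}$) together with the injectivity of $\imath_X$. Openness of $I(\tildeX)$ is automatic since every isolated point is, by definition, open in $\tildeX$.

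Next, for (2), I would apply Theorem \ref{4.3.1}: the complement $\tildeX\setminus I(\tildeX)$ coincides with the set $\tilde{\Lambda}_X$ defined there, and the canonical map $\Phi\colon\X\to\tilde{\Lambda}_X$ built in the proof of Theorem \ref{4.3.1} is a homeomorphism intertwining $\sigma$ and $\sigma_{\tildeX}|_{\tilde{\Lambda}_X}$. This automatically yields invertibility of the restricted subsystem (since $\sigma$ on $\X$ is invertible) and the asserted conjugacy. The only subtlety is to point out that $\tildeX\setminus I(\tildeX)=\tilde{\Lambda}_X$, which holds by part (1).

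Finally, for (3), I would combine Theorem \ref{4.2.5} (which gives $\#\pi_X^{-1}(x)=1$ whenever $x\in X\setminus\bigsqcup_{\jf\in\J_X}Orb_{\sigma}(\omega_\jf)$, i.e., when $x$ has a totally unique past) with Theorem \ref{4.2.4} (which gives $\#\pi_X^{-1}(x)=\mathfrak{d}(x)+1$ whenever $x\in\bigsqcup_{\jf\in\J_X}Orb_{\sigma}(\omega_\jf)$). Since the two cases partition $X$, this covers all $x\in X$. There is no genuine obstacle here: the real work has been done in Sections \ref{sec:3} and \ref{sec:4}, and this final theorem is a clean restatement. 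If anything, the main point to be careful about is the bookkeeping of which points lie in which of the two cases, so I would briefly remark that the dichotomy used in (3) matches exactly the dichotomy "totally unique past versus lying on the orbit of some $\omega_\jf$" which was established using Proposition \ref{3.2.2}.
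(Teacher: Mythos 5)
Your proposal is correct and coincides with the paper's treatment: Theorem \ref{4.3.2} is explicitly presented there as a summary of Section \ref{sec:4}, with (1) given by Theorem \ref{4.1.1}, (2) by Theorem \ref{4.3.1}, and (3) by Theorems \ref{4.2.4} and \ref{4.2.5}, exactly as you assemble them. Your added remarks on disjointness, openness, and the identification $\tildeX\setminus I(\tildeX)=\tilde{\Lambda}_X$ are correct and harmless.
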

\begin{rem}\label{4.3.3}
Last but not least, since we only consider systems with alphabet $\mathcal{A}=\{0,1\}$ in order to simplify our proofs, we would also like to mention how our results depend on the number of symbols. In fact, all but (3) in Theorem \ref{4.3.2} hold for systems over any finite alphabet $\mathcal{A}$. In fact,  Lemma \ref{4.2.2} may fail even for $\mathcal{A}=\{0,1,2\}$. This is because for a left special element, say $\omega$, we don't know exactly what is the preimage of $\omega$, for it could be any of $\{0\omega, 1\omega\}$, $\{1\omega, 2\omega\}$, $\{0\omega,2\omega\}$ or $\{0\omega, 1\omega, 2\omega\}$. On the other hand, we see that the proof of (1) and (2) have nothing to do with the number of symbols.
\end{rem}

\section{A commutative diagram and the nuclear dimension}\label{sec:5}
We conclude our main result in this short section, concerning the nuclear dimension of the Cuntz-Pimsner $C^*$-algebra $\mathcal{O}_X$ associated to every minimal one-sided shift over an infinite space $X$ with finitely many special elements.
\begin{thm}\label{5.0.1}
Let $X$ be a one-sided minimal shift space with finitely many  special elements. Then there is a commutative diagram
\[
\xymatrix{
  & 0 \ar[r]&  c_0^{\mathfrak{n}_X} \ar[r]\ar[d]& \mathcal{D}_X \ar[r]\ar[d]& C(\underline{X}) \ar[r]\ar[d]& 0\\
  & 0 \ar[r]&\ar[r] \mathbb{K}^{\mathfrak{n}_X} \ar[r]& \mathcal{O}_X \ar[r]& C(\X)\rtimes_{\sigma}\Z \ar[r] & 0
    }
\]
where the horizontal arrows are short exact, the vertical arrows are inclusions, and $\mathfrak{n}_X$ is the number of right tail equivalence classes of left special elements in $X$. Besides, the Cuntz-Pimsner algebra $\mathcal{O}_X$ has nuclear dimension $1$.
\end{thm}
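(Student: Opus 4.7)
For the commutative diagram, I would assemble the exact sequences directly from Theorem~\ref{4.3.2}. The decomposition $\tildeX = I(\tildeX)\sqcup(\tildeX\setminus I(\tildeX))$ into an open dense invariant set and a closed invariant set, together with the conjugacy $(\tildeX\setminus I(\tildeX),\sigma_{\tildeX})\cong(\X,\sigma)$, gives the top row: $\mathcal{D}_X = C(\tildeX)$ sits in
\[
0\to C_0(I(\tildeX))\to C(\tildeX)\to C(\tildeX\setminus I(\tildeX))\to 0,
\]
with $C_0(I(\tildeX))\cong c_0^{\mathfrak n_X}$ (one $c_0$ per discrete orbit, since each is a countable discrete open subset) and $C(\tildeX\setminus I(\tildeX))\cong C(\X)$. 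For the bottom row, the open invariant subset $I(\tildeX)\subset\mathcal G_{\tildeX}^{0}$ induces
\[
0\to C^*(\mathcal G_{\tildeX}|_{I(\tildeX)})\to \mathcal O_X\to C^*(\mathcal G_{\tildeX}|_{\tildeX\setminus I(\tildeX)})\to 0.
\]
Theorem~\ref{4.3.2}(2) identifies $\mathcal G_{\tildeX}|_{\tildeX\setminus I(\tildeX)}$ with the transformation groupoid of $\sigma:\X\to\X$, so the quotient is $C(\X)\rtimes_\sigma\Z$. Each discrete orbit is aperiodic because $\X$ carries no periodic points, so the restriction of $\mathcal G_{\tildeX}$ to it is the pair groupoid on a countable set, whose $C^*$-algebra is $\mathbb K$; summing over the $\mathfrak n_X$ orbits gives the ideal $\mathbb K^{\mathfrak n_X}$. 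The vertical maps are the canonical diagonal inclusions and commute with the horizontal arrows by construction.

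For the nuclear dimension statement, my plan mirrors K.~Brix's treatment of the Sturmian case. The two inputs are Theorem 3.1 of Guentner--Willett--Yu, which gives dynamic asymptotic dimension $1$ for the transformation groupoid of any minimal $\Z$-action on a compact space, and the elementary fact that each of the $\mathfrak n_X$ discrete-orbit pieces has dynamic asymptotic dimension $0$. I would combine these by a direct covering argument to establish $\dim_{\mathrm{das}}(\mathcal G_{\tildeX})\le 1$: given an open relatively compact $K\subset\mathcal G_{\tildeX}$, take a two-colored Rokhlin-type cover of $(s(K)\cup r(K))\cap(\tildeX\setminus I(\tildeX))$, fatten it to an open cover of $\tildeX$, and use that $K$ meets only finitely many isolated points on each discrete orbit. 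Since $\tildeX$ is totally disconnected and $\mathcal G_{\tildeX}$ is principal, amenable and \'etale, Theorem~8.6 of Guentner--Willett--Yu then yields $\dim_{\mathrm{nuc}}(\mathcal O_X)\le\dim_{\mathrm{das}}(\mathcal G_{\tildeX})\le 1$. The matching lower bound is immediate: $C(\X)\rtimes_\sigma\Z$ is a simple unital $\mathrm{A}\T$-algebra with nonzero $K_1$, hence not AF, and nuclear dimension does not increase under quotients.

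The main obstacle will be the direct estimate $\dim_{\mathrm{das}}(\mathcal G_{\tildeX})\le 1$. Dynamic asymptotic dimension is not obviously additive over the open--closed decomposition of the unit space, so one cannot just add the dimensions of the two pieces. The delicate step is producing a \emph{single} two-colored cover whose $K$-restrictions sit inside genuinely relatively compact subgroupoids: the accumulation of the $\mathfrak n_X$ isolated orbits onto $\tildeX\setminus I(\tildeX)$ must be handled carefully enough not to force a third color. Once this is arranged, the rest of the argument is a routine book-keeping exercise parallel to Brix's.
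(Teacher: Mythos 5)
Your proposal follows essentially the same route as the paper: the same open--closed decomposition of the unit space $\mathcal{G}_{\tildeX}^0=\tilde{\Lambda}_X\sqcup I(\tildeX)$ yielding both exact rows, the same combination of Guentner--Willett--Yu's dynamic-asymptotic-dimension-one result for the minimal $\Z$-piece with the finiteness of $K$ on the discrete orbits to get $\dim_{\rm nuc}(\mathcal{O}_X)\le 1$, and the same lower bound via the non-AF quotient $C(\X)\rtimes_\sigma\Z$. The ``obstacle'' you flag is resolved exactly as you sketch (and as the paper does): since the discrete orbits form an invariant open set, adjoining all of them to each of the two colors adds only finitely many arrows of $K$, which generate a finite subgroupoid disjoint from the $\tilde{\Lambda}_X$-part, so no third color is needed.
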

\begin{proof}
It suffices to show the exact sequence on the second row, since $c_0^{\mathfrak{n}_X}$ corresponds to the abelian $C^*$-algebra of the space of $\mathfrak{n}_X$ discrete orbits and the commutativity of the diagram is induced by $\pi_X$. From the description of the cover $\tildeX$, the unit space of its groupoid $\mathcal{G}_{\tildeX}$ decomposes into two parts:
\[\mathcal{G}_{\tildeX}^0=\tilde{\Lambda}_X\bigsqcup\left(\bigsqcup_{\jf\in\J_X}\imath_X(Orb_{\sigma}(\omega_\jf))\right).\]
In particular, the groupoid restricted to $\tilde{\Lambda}_X$ is isomorphic to $\X\rtimes_\sigma\Z$ by Theorem \ref{4.3.1}, whose $C^*$-algebra is $*$-isomorphic to the crossed product $C(\X)\rtimes_\sigma\Z$, and the groupoid restricted to the open subset $\bigsqcup_{\jf\in\J_X}\imath_X(Orb_{\sigma}(\omega_\jf))$ is the sum of full equivalence relations restricted on each discrete orbit $\imath_X(Orb_\sigma(\omega_\jf))\,(\jf\in\J_X)$, whose $C^*$-algebra is $*$-isomorphic to the direct sum $\mathbb{K}^{\mathfrak{n}_X}$. Then the exactness of the second row follows from Proposition 4.3.2 in \cite{SW}.

For the nuclear dimension of $\mathcal{O}_X$, we first claim that
\[{\bf Claim.}\ \mathcal{G}_{\tildeX}\ {\rm has\ dynamic\ asymptotic\ dimension\ }1.\]
To see this, let $K$ be an open relative compact subset of $\mathcal{G}_{\tildeX}$. Denote the groupoid restricted on $\tilde{\Lambda}_X=\X$ by $\mathcal{G}_{\tilde{\Lambda}}$. It has already been verified that $\mathcal{G}_{\tilde{\Lambda}}$ is a minimal reversible groupoid, or in other words, a groupoid of an invertible minimal action on an infinite compact space, which follows that it has asymptotic dimension $1$. Then there are open subsets $\tilde{U}_0, \tilde{U}_1$ of its unit space $\mathcal{G}_{\tilde{\Lambda}}^0$ that cover $s(K\cap \mathcal{G}_{\tilde{\Lambda}})\cup r(K\cap \mathcal{G}_{\tilde{\Lambda}})$, and the set
\[\{g\in K\cap \mathcal{G}_{\tilde{\Lambda}}: s(g), r(g)\in \tilde{U}_i\}\]
is contained in a relatively compact subgroupoid of $\mathcal{G}_{\tilde{\Lambda}}$ for $i=0,1$. Let
\[U_i=\tilde{U}_i\sqcup\left(\bigsqcup_{\jf\in\J_X}\imath_X(Orb_{\sigma}(\omega_\jf))\right).\]
It is clear that $U_i$ are open and cover $s(K)\cup r(K)$. On the other hand, since the right most one is an discrete open set and $K$ is relatively compact, the set $\{g\in K\setminus\mathcal{G}_{\tilde{\Lambda}}: s(g), r(g)\in U_i\}$
is a finite set for $i=0,1$. This implies that the groupoid generated by 
\[\{g\in K: s(g), r(g)\in U_i\}\]
is a relatively compact subgroupoid for $i=0,1$. This shows $\mathcal{G}_{\tildeX}$ has dynamic asymptotic dimension $1$.

Now from Theorem 8.6 of \cite{GWY}, 
\[{\rm dim}_{\rm nuc}(\mathcal{O}_X)\leq 1.\]
However, by the exact sequence and Proposition 2.9 of \cite{WZ}, 
\begin{align*}
1&=\max\{{\rm dim}_{\rm nuc}(\mathbb{K}^{\mathfrak{n}_X}), {\rm dim}_{\rm nuc}(C(\X)\rtimes_{\sigma}\Z)\}\\
&\leq{\rm dim}_{\rm nuc}(\mathcal{O}_X)\\
&\leq{\rm dim}_{\rm nuc}(\mathbb{K}^{\mathfrak{n}_X})+{\rm dim}_{\rm nuc}(C(\X)\rtimes_{\sigma}\Z)+1=2.
\end{align*}
We then conclude that ${\rm dim}_{\rm nuc}(\mathcal{O}_X)=1$. This finishes the proof.
\end{proof}
\begin{rem}
An alternative argument for the last part of Theorem \ref{5.0.1} would just be that, as a $C^*$-algebra of a groupoid associated with a minimal system over an infinite compact metric space, $\mathcal{O}_X$ is not AF. This follows immediately that ${\rm dim}_{\rm nuc}(\mathcal{O}_X)\ge1$.
\end{rem}
\quad\par

%\bibliographystyle{plain}
%\bibliography{Biblio-Database}

%\begin{thebibliography}{99}
%\addcontentsline{toc}{chapter}{References}
%\thispagestyle{plain}

\quad\par

Sihan Wei, School of Mathematics and Science, East China Normal University, Shanghai, China

{\em Email address}: {\bf sihanwei2093@yeah.net}
\quad\par
\quad\par
Zhuofeng He, Research Center for Operator Algebras, East China Normal University, Shanghai, China

{\em Email address}: {\bf zfhe@math.ecnu.edu.cn}

%\subsection{The $K_0$-group of $\mathcal{O}_X$}
%\begin{thm}
%Let $X$ be a one-sided minimal shift space and $\X$ be its corresponding two-sided shift. If $X$ has only finite left special elements, then \textcolor{red}{???}
%\end{thm}

\end{document}